\def\d{\,{\rm d}}
\def\div{{\rm div}}
\def\RR{\mathbb{R}}
\def\tX{\widetilde X}
\newtheorem{theorem}{Theorem}
\newtheorem{lemma}[theorem]{Lemma}
\def\XXint#1#2#3{{\setbox0=\hbox{$#1{#2#3}{\int}$ }
\vcenter{\hbox{$#2#3$ }}\kern-.6\wd0}}
\def\gmin{{g_{min}}}
\def\gmax{{g_{max}}}
\def\Bd{{B^{\delta}}}
\def\Td{{T^{\delta}}}
\begin{document}
\title[Numerical identification of a nonlinear diffusion law]{Numerical identification \\of a nonlinear diffusion law\\via regularization in Hilbert scales}
\author[H. Egger]{Herbert Egger$^\dag$}
\author[J.-F. Pietschmann]{Jan-Frederik Pietschmann$^\dag$}
\author[M. Schlottbom]{Matthias Schlottbom$^\dag$}
\thanks{$^\dag$Numerical Analysis and Scientific Computing, Department of Mathematics, TU Darmstadt, Dolivostr. 15, 64293 Darmstadt. \\
Email: {\tt $\{$egger,pietschmann,schlottbom$\}$@mathematik.tu-darmstadt.de}}

\begin{abstract}
  We consider the reconstruction of a diffusion coefficient in a quasilinear elliptic problem from a single measurement of overspecified Neumann and Dirichlet data. The uniqueness for this parameter identification problem has been established by Cannon and we therefore focus on the stable solution in the presence of data noise. For this, we utilize a reformulation of the inverse problem as a linear ill-posed operator equation with perturbed data and operators. 
  We are able to explicitly characterize the mapping properties of the corresponding operators which allow us to apply regularization in Hilbert scales. 
  We can then prove convergence and convergence rates of the regularized reconstructions under very mild assumptions on the exact parameter. These are, in fact, already needed for the analysis of the forward problem and no additional source conditions are required.
  Numerical tests are presented to illustrate the theoretical statements.
\end{abstract}

\maketitle


{\footnotesize
{\noindent \bf Keywords:} 
nonlinear diffusion,
parameter identification,
quasilinear elliptic problems, 
regularization in Hilbert scales
}


{\footnotesize
\noindent {\bf AMS Subject Classification:}  
35R30,65J22
}

\section{Introduction}
We consider the identification of an unknown coefficient $a=a(u)$ in the quasilinear elliptic problem
\begin{align}
  -\div(a(u)\nabla u) & =  0 \qquad \text{in } \Omega, \label{eq:quasi}\\
 a(u) \partial_n u &= j \qquad  \text{on } \partial \Omega, \label{eq:quasi_bc} 
\end{align}
from additional observation of boundary data
\begin{align} 
                 u &= g \qquad \text{on } \gamma \subset \partial \Omega. \label{eq:quasi_meas}
\end{align}
along a curve $\gamma \subset \partial \Omega$. 
Under mild assumptions on the parameter $a$, the data $j$, and the domain $\Omega$,
the boundary value problem \eqref{eq:quasi}--\eqref{eq:quasi_bc} has a continuously differentiable solution which is unique up to constants and sufficiently smooth such that the measurements \eqref{eq:quasi_meas} make sense.
Inverse problems of this kind arise for instance in steady state heat transfer, where $u$ denotes the temperature and $a(u)$ the thermal conductivity; see \cite{Alifanov,BeckBlackwellStClair,Cannon} for related inverse problems in heat transfer. Other possible applications are population dynamics or flow in porous media; see \cite{Gurney1975,Vazquez}.

The identifiability of $a(u)$ from \eqref{eq:quasi}--\eqref{eq:quasi_meas} on the range of states that are attained on $\gamma$ has been established by Cannon \cite{Cannon67}, see also \cite{Isakov93}. 
Other variants of single measurements of overspecified boundary data have been considered in \cite{Kuegler03,PilantRundell88}.
Using observation of the full Dirichlet-to-Neuman map, the identifiability of $a(u)$ in the presence of an additional unkown coefficient $c(x)$ in front of the zero order term has been established recently by the authors \cite{EggPieSch13}. For such rich measurements, even uniqueness for a parameter $a(x,u)$ depending on position and state can be proven \cite{Sun96}.
The identification of other state dependent parameters in semi-linear elliptic equations has been investigated for instance in \cite{Isakov93,IsakovSylvester94}. 
Related uniqueness and stability results for parabolic problems can be found in \cite{CannonDuChateau80,CannonYin89,DuChateauRundell85,DuChateau04,EggerEnglKlibanov05,Isakov93,Isakov93b,KaltenbacherKlibanov08,Klibanov04,LorenziLunardi90,PilantRundell86}; see also \cite{Isakov06,KlibanovTimonov04,Yamamoto09} for an overview and further results.

The inverse problem of determining the coefficient $a(u)$ from \eqref{eq:quasi}--\eqref{eq:quasi_meas} can be shown to be ill-posed when realistic topologies for the data error and the parameter space are used. Therefore, some regularization method is required for a stable solution.  
A direct application of Tikhonov regularization yields regularized approximations of the form
\begin{align} \label{eq:tik}
a_\alpha^\delta = \arg\min \|u(a) - g^\delta\|_{Y}^2 + \alpha \|a\|_{X}^2, 
\end{align}
where $g^\delta$ are the perturbed Dirichlet data and $u(a)$ is the solution of \eqref{eq:quasi}--\eqref{eq:quasi_bc} normalized such that $\int_\gamma u \d s = \int_\gamma g^\delta \d s$. 
A similar approach has been considered in \cite{InghamYuan93}.
The topology of the parameter space $X$ and the measurment space $Y$ have to be chosen suitably in order to make this regularization strategy well-defined and converging. 
Note that a numerical realization of \eqref{eq:tik} requires the minimization of a non-convex functional governed by a nonlinear partial differential equation. In addition, convergence of the regularized solutions with rates can only be guaranteed with an appropriate source condition. 
As shown by K\"ugler \cite{Kuegler03}, for a problem with slightly different boundary data, the standard source condition \cite{EHN96,EnKuNeu89} holds for the choice $X=H^1(I)$ and $Y=L^2(\gamma)$, provided that the exact solution $a$ is sufficiently smooth on the range $I$ of relevant states. Convergence rates $\|a-a_\alpha^\delta\|_{H^1(I)} = O(\sqrt{\delta})$ have been established in \cite{Kuegler03} for $a \in H^4(I)$ subject to some non-trivial additional boundary conditions. 
Let us remark that, based on the results of \cite{Kuegler03}, corresponding results can also be established for other, e.g.,  iterative regularization methods \cite{BakGon89,KalNeuSch08}.
Various numerical methods for related problems in inverse heat transfer have been investigated in \cite{AliArt75,Art80,EggHenMarMha09,LesEllIng96,LesIng95,Yang03}; see also \cite{Alifanov,BeckBlackwellStClair,Cannon} for an introduction and further references. 

In this paper, we propose a problem adapted strategy for the solution of the parameter identification problem for \eqref{eq:quasi}--\eqref{eq:quasi_meas}.
We utilize a reformulation of the inverse problem as a linear operator equation
\begin{align} \label{eq:linip}
T A = y, 
\end{align}
with operator $T$ and data $y$ depending on the boundary data $j$ and $g$, respectively. 
For ease of presentation, we utilize the antiderivative $A$ of the parameter $a$ here. 
A similar argument is used for the proof of uniqueness; see \cite{Cannon67} and Section~\ref{sec:main} below. 
We will explicitly characterize the smoothing properties of the operator $T$ which allow us to utilize Tikhonov regularization in Hilbert scales \cite{Nat84}; see also \cite{Egg06,Neu88,Tau96}.
Since we allow for perturbations in both boundary data \eqref{eq:quasi_bc} and \eqref{eq:quasi_meas},
we have to consider perturbations in the right hand side $y$ and in the operator $T$ simultaneously. 
Based on the characterization of the mapping properties of $T$, we are able to prove convergence rates 
under very weak assumptions on the parameter $a$, which are already required to make the forward problem \eqref{eq:quasi}--\eqref{eq:quasi_meas} well-defined.
\\

The outline of the paper is as follows:
In the next section, we introduce our basic assumptions and briefly state the main results. The proofs are then given in the following  sections. In Section~\ref{sec:solvability}, we establish solvability and regularity results for the governing boundary value problem \eqref{eq:quasi}--\eqref{eq:quasi_bc}. 
The forward operator $T$ of the linear inverse problem~\eqref{eq:linip} and its perturbation resulting from measurement errors 
are analyzed in Section~\ref{sec:map}.
We then introduce the Hilbert scale which is relevant for the problem under investigation in Section~\ref{sec:hs}, and we translate the mapping properties of $T$ into this scale.
In Section~\ref{sec:reghs}, we present the analysis of Tikhonov regularization in Hilbert scales incorporating perturbations in the operator and right hand side simultaneously. 
For illustrattion of our theoretical results, some numerical experiments are presented in Section~\ref{sec:num}.

\section{Statement of the main results} \label{sec:main}

Let us briefly sketch the derivation of the uniqueness result which serves as a motivation for our approach. 
As outlined by Cannon \cite{Cannon67}, the function
$a(u)$ can be determined uniquely on, and only on, the interval
\begin{align} \label{eq:I}
I = \{g(x) : x \in \gamma\} = [\gmin,\gmax] 
\end{align}
of states $u$ that are attained on $\gamma$. 
This can be shown with the following argument: 
Let $v(x)=A(u(x))$ where $A(u) = \int_{\gmin}^{u} a(w) \d w$ is the antiderivative of $a(u)$. 
Using the chain rule,  \eqref{eq:quasi}--\eqref{eq:quasi_meas} can be seen to be equivalent to the boundary value problem 
\begin{align}
-\Delta  v &= 0 \qquad \text{in } \Omega, \label{eq:bvp1}\\
\partial_n   v &= j \qquad \text{on } \partial \Omega, \label{eq:bvp2}
\end{align}
with overspecified Dirichlet data
\begin{align} \label{eq:bvp3}
 v(x) &= A(g(x)) \qquad \text{on } \gamma.
\end{align}
Note that $v$ is determined up to a constant via  \eqref{eq:bvp1}-\eqref{eq:bvp2} without knowledge of the parameter $a$.
Once $v$ has been computed, the unknown parameter can be determined by \eqref{eq:bvp3} alone. 
Formally differentiating this equation along $\gamma$ then yields 
the explicit reconstruction formula 
\begin{align} \label{eq:formula}
 \frac{d}{ds} v(\gamma(s))  = a(g(\gamma(s)) \frac{d}{ds} g(\gamma(s)), \qquad s \in [0,1],
\end{align}
which was the main result of \cite{Cannon67} and which implies uniqueness.\\

To make this derivation rigorous, let us pose some assumptions on the domain and the exact data:
\begin{itemize}
 \item[(A1)] $\Omega\subset\RR^3$, is a bounded domain with $C^{1,1}$ boundary.
 \item[(A2)] $j \in W^{1-1/p,p}(\partial \Omega)$ for some $p>3$ and $\int_{\partial \Omega} j \d s = 0$.
\end{itemize}
We only consider the three dimensional case here, but all results carry over easily to 
two dimensions. 
By $W^{s,p}(\Omega)$ we denote the usual Sobolev spaces of functions with broken derivatives in $L^p(\Omega)$, see \cite{Adams75},
and as usual we write $H^k(\Omega)=W^{k,2}(\Omega)$ for the corresponding Hilbert spaces.
In addition to the assumptions on the experimental setup, we require that the exact parameter satisfies
\begin{itemize}
 \item[(A3)] $a\in H^1(\RR)$ and $0 < c_a \leq a(u) \leq c_a^{-1}$ for all $u \in \RR$
\end{itemize}
for some constant $c_a$. 
Under such conditions, which are reasonable from a physical point of view, one can show 
\begin{theorem} \label{thm:solvability}
Let (A1)--(A3) hold. 
Then the boundary value problem \eqref{eq:quasi}--\eqref{eq:quasi_bc} has a solution $u \in W^{1,p}(\Omega) \subset  C^1(\overline{\Omega})$ which is unique up to constants.
\end{theorem}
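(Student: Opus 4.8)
The plan is to reduce the quasilinear problem to the linear one via the Kirchhoff transformation and then invoke elliptic regularity, reversing the transformation at the end. Concretely, I would first introduce the antiderivative $A(u)=\int_0^u a(w)\d w$; since $a\in H^1(\RR)\hookrightarrow C(\RR)$ and $c_a\le a\le c_a^{-1}$, the map $A:\RR\to\RR$ is a bi-Lipschitz $C^1$-diffeomorphism onto $\RR$ with $A'=a$ bounded above and below. The substitution $v=A(u)$, equivalently $u=A^{-1}(v)$, turns $a(u)\nabla u=\nabla v$ by the chain rule, so \eqref{eq:quasi}--\eqref{eq:quasi_bc} becomes the linear Neumann problem \eqref{eq:bvp1}--\eqref{eq:bvp2}. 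Care is needed to make the chain rule rigorous at Sobolev regularity: I would first work at the level of weak solutions in $H^1(\Omega)$, where the composition $v=A(u)$ of a bi-Lipschitz function with an $H^1$ function is again $H^1$ with $\nabla v=a(u)\nabla u$, and the weak formulations of the two problems are equivalent.

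Next I would solve the linear problem \eqref{eq:bvp1}--\eqref{eq:bvp2}. By (A2), $j\in W^{1-1/p,p}(\partial\Omega)$ with $\int_{\partial\Omega}j\d s=0$, so the Neumann problem has a weak solution $v\in H^1(\Omega)$ unique up to an additive constant. Since $\Omega$ has $C^{1,1}$ boundary (A1), elliptic regularity for the Neumann problem (e.g. the $L^p$-theory of Agmon--Douglis--Nirenberg / Grisvard) gives $v\in W^{2,p}(\Omega)$, and because $p>3$ the Sobolev embedding $W^{2,p}(\Omega)\hookrightarrow C^1(\overline\Omega)$ holds in three dimensions. Fixing the free constant (say by normalizing $v$) makes $v$ unique. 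Then $u=A^{-1}(v)$; since $A^{-1}$ is $C^1$ on $\RR$ with derivative $1/a(A^{-1}(\cdot))$, and $v\in W^{1,p}(\Omega)\cap C^1(\overline\Omega)$, I would argue $u\in W^{1,p}(\Omega)\subset C^1(\overline\Omega)$: pointwise $u$ inherits continuity and boundedness from $v$, and $\nabla u=\nabla v/a(u)$ lies in $L^p(\Omega)$ because $a(u)$ is bounded below. Uniqueness of $u$ up to constants follows from uniqueness of $v$ up to constants together with the bijectivity of $A$ — though strictly this gives uniqueness of $v$ up to constants, and one should note that "$u$ unique up to constants" in the statement is to be read in the sense that it is the additive constant in $v=A(u)$ that is free; I would phrase this carefully.

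I expect the main obstacle to be the interplay between the low regularity available a priori ($H^1$) and the $C^1$-regularity that must be proven, specifically justifying the chain rule for $v=A(u)$ and $u=A^{-1}(v)$ at exactly the Sobolev exponents involved, and confirming that composition with the bi-Lipschitz $A^{\pm1}$ preserves the spaces $H^1(\Omega)$ and $W^{1,p}(\Omega)$ with the expected gradient formula. The second delicate point is extracting $W^{2,p}$-regularity for the Neumann problem on a merely $C^{1,1}$ domain with the given fractional-regularity boundary data $j\in W^{1-1/p,p}(\partial\Omega)$; one must cite the correct $L^p$ elliptic estimate and check the trace/compatibility condition $\int_{\partial\Omega}j\,\d s=0$ is exactly what is needed for solvability. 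Everything else — boundedness of $A^{\pm1}$, the Sobolev embedding $W^{2,p}\hookrightarrow C^1$ for $p>3$ in $\RR^3$, and normalization of the additive constant — is routine.
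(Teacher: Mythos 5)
Your proposal is correct and follows essentially the same route as the paper: the Kirchhoff substitution $v=A(u)$ reducing to the linear Neumann problem, Lax--Milgram plus $L^p$ elliptic regularity and the embedding $W^{2,p}(\Omega)\hookrightarrow C^1(\overline\Omega)$ for $p>3$, and then $u=A^{-1}(v)$ with $\nabla u=\nabla v/a(u)$. Your remark that ``unique up to constants'' really refers to the free additive constant in $v$ (the solution family being $A^{-1}(v_0+c)$, not $u_0+c$) is a careful reading the paper itself glosses over, but it does not change the argument.
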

The proof of this result is given in Section~\ref{sec:solvability}. 
As a consequence of the regularity of the solution, 
the additional measurement \eqref{eq:quasi_meas} is in fact well-defined,
and the following conditions, which we require for the analysis of the inverse problem, make sense:
\begin{itemize}
 \item[(A4)] $\gamma:[0,1] \to \partial \Omega$ is a $C^1$ curve, i.e., $0 < c_\gamma \le |\gamma'(s)| \le c_\gamma^{-1}$.
 \item[(A5)] $0 < c_g \le |g'(\gamma(s))| \le c_g^{-1}$ for all $s \in [0,1]$.
\end{itemize}
These conditions can be addressed by a reasonable measurement setup. 
It is actually sufficient to require them in a piecewise sense.\\

Now let $v$ be the solution of the transformed problem \eqref{eq:bvp1}--\eqref{eq:bvp2} with exact Neumann data,
and let us define an operator $T$ and data $y$ by
\begin{align} \label{eq:opT}
T: A \mapsto A(g(\gamma(\cdot))
\quad \text{and} \quad y(s) = v(\gamma(s)), \quad s \in [0,1].
\end{align}
Then condition \eqref{eq:bvp3} can be written equivalently as linear operator equation 
\begin{align} \label{eq:ip}
 T A = y \qquad \text{on} \quad  [0,1].
\end{align}
The mapping properties of the linear operator $T$ are characterized by
\begin{theorem} \label{thm:map}
 Let (A1)-(A5) hold. Then 
 \begin{align} \label{eq:mapT}
 c_T \|A\|_{L^2(I)} \le \|T A\|_{L^2(0,1)} \le C_T \|A\|_{L^2(I)}
 \end{align}
for all $A \in L^2(\Omega)$ with positive constants $c_T$, $C_T$. 
\end{theorem}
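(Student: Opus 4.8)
The plan is to show that the operator $T\colon A \mapsto A\circ g\circ\gamma$ is, up to equivalence of norms, a composition operator with a bi-Lipschitz change of variables, so that $\|TA\|_{L^2(0,1)}^2 = \int_0^1 |A(g(\gamma(s)))|^2\,{\rm d}s$ can be transformed into an integral over $I$ against a weight bounded above and below. Concretely, I would set $\varphi(s) = g(\gamma(s))$ and observe that by (A4)--(A5), using the chain rule $\varphi'(s) = g'(\gamma(s))\,\gamma'(s)$ is bounded in absolute value from above and below by positive constants, say $0 < c_\varphi \le |\varphi'(s)| \le c_\varphi^{-1}$ with $c_\varphi = c_g c_\gamma$. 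Since $\varphi$ is $C^1$ with non-vanishing derivative on the connected interval $[0,1]$, it is strictly monotone, hence a bijection from $[0,1]$ onto its image; by the definition \eqref{eq:I} of $I$ and (A5), that image is exactly $I = [\gmin,\gmax]$ (the endpoints of $g\circ\gamma$ attain $\gmin$ and $\gmax$, and monotonicity fills in everything between).

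The key step is then the substitution $u = \varphi(s)$. Because $\varphi$ is a bi-Lipschitz bijection $[0,1]\to I$, it is absolutely continuous with absolutely continuous inverse, so the change-of-variables formula applies to $L^2$ functions:
\begin{align*}
\|TA\|_{L^2(0,1)}^2 = \int_0^1 |A(\varphi(s))|^2\,{\rm d}s = \int_I |A(u)|^2 \,\frac{1}{|\varphi'(\varphi^{-1}(u))|}\,{\rm d}u.
\end{align*}
The bounds $c_\varphi \le |\varphi'| \le c_\varphi^{-1}$ immediately give
\begin{align*}
c_\varphi \|A\|_{L^2(I)}^2 \le \|TA\|_{L^2(0,1)}^2 \le c_\varphi^{-1}\|A\|_{L^2(I)}^2,
\end{align*}
which is \eqref{eq:mapT} with $c_T = c_\varphi^{1/2}$ and $C_T = c_\varphi^{-1/2}$. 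Note that only the restriction $A|_I$ enters, consistent with the statement that $T$ is defined on $A \in L^2(\Omega)$ but controlled by the $L^2(I)$ norm (and with the identifiability being limited to $I$).

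I expect the only genuine subtlety to be a careful justification that $\varphi = g\circ\gamma$ really is a $C^1$ diffeomorphism onto all of $I$ with the asserted derivative bounds, i.e. combining (A4) and (A5) correctly and invoking that a $C^1$ function on an interval with non-vanishing derivative is a strict monotone bijection onto its image; once this is in place the change of variables is routine. A minor point worth stating is that the measurability/absolute-continuity hypotheses needed for the substitution formula are all supplied by the bi-Lipschitz property of $\varphi$, so no extra regularity on $A$ beyond $A\in L^2$ is required. If one wishes to allow (A4)--(A5) only in a piecewise sense, as remarked after the assumptions, one simply splits $[0,1]$ into finitely many subintervals on each of which $\varphi$ is monotone, applies the above on each piece, and sums, absorbing the (finite) overlap multiplicities into the constants $c_T, C_T$.
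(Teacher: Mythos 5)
Your proposal is correct and follows essentially the same route as the paper: set $h(s)=g(\gamma(s))$, use (A4)--(A5) to see that $h$ is a bi-Lipschitz monotone bijection onto $I$, and apply the change of variables $u=h(s)$ to obtain $\|TA\|_{L^2(0,1)}^2=\int_I |A(u)|^2\,|h'(h^{-1}(u))|^{-1}\,{\rm d}u$ with a weight bounded above and below. Your write-up merely makes explicit the monotonicity/surjectivity justification and the piecewise variant that the paper leaves implicit.
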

This statement is proven in Section~\ref{sec:map} and can be used 
to characterize the degree of ill-posedness of the inverse problem \eqref{eq:ip} depending on the choice of topologies.
We will later consider $T$ as an operator from $H^2$ to $L^2$, which yields an inverse 
problem that is, loosely speaking, as ill-posed as two times differentiation.\\

Let us now turn to the practically relevant case that the data are perturbed by the measurement process. 
Denote by $j^\delta$ and $g^\delta$ the measured boundary data
and let $\Td$ and $y^\delta$ be the corresponding operator and right hand side, respectively. 
The problem to be actually solved then reads
\begin{align} \label{eq:pip} 
\Td A = y^\delta \quad \text{on } [0,1].
\end{align}
For the analysis of this perturbed inverse problem, we will assume that the following conditions on the measurement errors hold:
\begin{itemize}
 \item[(A6)] $\|j - j^\delta\|_{L^2(\partial \Omega)} \le \delta$.
 \item[(A7)] $\|g - g^\delta\|_{L^2(\gamma)} \le \delta$ and $g^\delta(\gamma) \subset [\gmin,\gmax]$. 
\end{itemize}
The second condition on $g^\delta$ is made only to simplify our presentation. 
It can be guaranteed, e.g., by a bound for the noise in $L^\infty$ and appropriate truncation of the data. 
Using (A6)--(A7), we can estimate the perturbations in the operator $T$ and the right hand side $y$ in \eqref{eq:pip} as follows:
\begin{theorem} \label{thm:perT}
Let (A1)--(A7) hold. 
Then 
\begin{align} \label{eq:perturbation}
 \|(T - \Td) w\|_{L^2(0,1)} \le C \delta \|w\|_{H^2(I)} 
 \quad \text{and} \quad 
 \|y-y^\delta\|_{L^2(0,1)} \le C \delta
\end{align}
with some constant $C$ independent of the noise level $\delta$.
\end{theorem}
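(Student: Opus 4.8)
The plan is to estimate the two quantities in \eqref{eq:perturbation} separately, reducing both to the stability properties of the Neumann problem \eqref{eq:bvp1}--\eqref{eq:bvp2} and to elementary estimates for the composition operator $A\mapsto A\circ g\circ\gamma$. For the second bound, let $v$ and $v^\delta$ denote the solutions of \eqref{eq:bvp1}--\eqref{eq:bvp2} with data $j$ and $j^\delta$, normalized by $\int_{\partial\Omega}v\d s=\int_{\partial\Omega}v^\delta\d s=0$. Then $w:=v-v^\delta$ solves the homogeneous Laplace equation with Neumann data $j-j^\delta$, so by the standard energy estimate together with (A6) we get $\|v-v^\delta\|_{H^1(\Omega)}\le C\delta$, and by the trace theorem $\|v-v^\delta\|_{L^2(\partial\Omega)}\le C\delta$. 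Since $\gamma$ is a $C^1$ curve by (A4), the trace of $v-v^\delta$ along $\gamma$ is controlled as well; here one should be slightly careful, because restricting an $H^1(\Omega)$ (equivalently $H^{1/2}(\partial\Omega)$) function to a one-dimensional curve is not bounded into $L^2(\gamma)$ in general. The fix is to invoke the $W^{1,p}$-regularity for $p>3$ from Theorem~\ref{thm:solvability} (applied to the linear problem, which is the special case $a\equiv 1$), giving $\|v-v^\delta\|_{W^{1,p}(\Omega)}\le C\delta$ and hence $v-v^\delta\in C(\overline\Omega)$ with $\|v-v^\delta\|_{C(\overline\Omega)}\le C\delta$; restriction to $\gamma$ is then trivially bounded and $\|y-y^\delta\|_{L^2(0,1)}=\|(v-v^\delta)\circ\gamma\|_{L^2(0,1)}\le C\delta$.

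For the first bound, write $(T-\Td)w=w\circ g\circ\gamma-w\circ g^\delta\circ\gamma$ pointwise on $[0,1]$. Fix $s$ and apply the fundamental theorem of calculus to the $C^1$ function $w$ (recall $w\in H^2(I)\subset C^1(I)$) on the segment between $g(\gamma(s))$ and $g^\delta(\gamma(s))$, both of which lie in $I=[\gmin,\gmax]$ by (A5) and (A7); this yields the pointwise estimate $|(T-\Td)w(s)|\le \|w'\|_{L^\infty(I)}\,|g(\gamma(s))-g^\delta(\gamma(s))|$. Taking $L^2$-norms in $s$ and using (A7) gives $\|(T-\Td)w\|_{L^2(0,1)}\le \|w'\|_{L^\infty(I)}\,\|g\circ\gamma-g^\delta\circ\gamma\|_{L^2(0,1)}$. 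Two things remain: bounding $\|w'\|_{L^\infty(I)}$ by $\|w\|_{H^2(I)}$, which is just the one-dimensional Sobolev embedding $H^2(I)\hookrightarrow C^1(\overline I)$ (valid on the bounded interval $I$), and converting $\|g\circ\gamma-g^\delta\circ\gamma\|_{L^2(0,1)}$ into $\|g-g^\delta\|_{L^2(\gamma)}$, which follows from the change of variables $x=\gamma(s)$ together with the two-sided bound $0<c_\gamma\le|\gamma'(s)|\le c_\gamma^{-1}$ from (A4); the latter shows the arclength measure and $\d s$ are comparable, so $\|g\circ\gamma-g^\delta\circ\gamma\|_{L^2(0,1)}\le c_\gamma^{-1/2}\|g-g^\delta\|_{L^2(\gamma)}\le c_\gamma^{-1/2}\delta$ by (A7). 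Combining the pieces gives $\|(T-\Td)w\|_{L^2(0,1)}\le C\delta\|w\|_{H^2(I)}$ with $C$ depending only on $c_\gamma$ and the embedding constant, hence independent of $\delta$.

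The main obstacle is the one I flagged above: making the trace of $v-v^\delta$ along the one-dimensional curve $\gamma$ rigorous, since the naive $H^1(\Omega)\to L^2(\gamma)$ restriction fails in three dimensions. The clean way around this is to use the higher integrability $W^{1,p}$, $p>3$, from Theorem~\ref{thm:solvability} so that $v-v^\delta$ is genuinely continuous and its restriction to $\gamma$ is harmless; this is exactly why assumptions (A1)--(A2) were stated with $p>3$ rather than just $p=2$. Everything else — the energy estimate for Laplace's equation, the fundamental theorem of calculus applied to the $C^1$ function $w$, the Sobolev embedding $H^2(I)\hookrightarrow C^1(\overline I)$ on the bounded interval, and the change of variables along $\gamma$ — is routine. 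Note that throughout, "$\le C\delta$" uses $\delta\le\delta_0$ implicitly only insofar as (A7) already guarantees $g^\delta(\gamma)\subset I$, so no smallness of $\delta$ is actually needed.
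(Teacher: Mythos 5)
Your first estimate (the operator perturbation) is correct and is essentially the paper's argument: the fundamental theorem of calculus between $h(s)=g(\gamma(s))$ and $h^\delta(s)=g^\delta(\gamma(s))$, both lying in $I$ by (A7), the one-dimensional embedding $H^2(I)\hookrightarrow C^1(\overline I)$, and the change of variables along $\gamma$ using (A4). If anything you are more explicit than the paper on the last step.

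The second estimate contains a genuine gap. You correctly identify that the restriction $H^1(\Omega)\to L^2(\gamma)$ fails for a one-dimensional curve in $\RR^3$, but your proposed fix --- invoking $W^{1,p}$-regularity with $p>3$ for the difference $v-v^\delta$ and concluding $\|v-v^\delta\|_{W^{1,p}(\Omega)}\le C\delta$ --- does not follow from (A6). The a priori estimate in $W^{1,p}(\Omega)$ with $p>3$ requires the Neumann data to be controlled in the corresponding trace space $W^{-1/p,p}(\partial\Omega)$, and for $p>3$ one does \emph{not} have $L^2(\partial\Omega)\hookrightarrow W^{-1/p,p}(\partial\Omega)$ (this embedding holds only for $p\le 3$ on a two-dimensional boundary). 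Assumption (A6) gives only $\|j-j^\delta\|_{L^2(\partial\Omega)}\le\delta$; the perturbed datum $j^\delta$ need not even belong to $W^{1-1/p,p}(\partial\Omega)$, and even if it did, its norm there would not be $O(\delta)$. One cannot gain derivatives this way. The correct route, which is the one the paper takes, is to use exactly the borderline regularity: $L^2(\partial\Omega)$ Neumann data yields $\|v-v^\delta\|_{W^{1,3}(\Omega)}\le C\|j-j^\delta\|_{L^2(\partial\Omega)}$ (equivalently, $H^{3/2}(\Omega)$-regularity). The boundary trace of a $W^{1,3}(\Omega)$ function lies in $W^{2/3,3}(\partial\Omega)$ (resp.\ $H^1(\partial\Omega)$), and since $2/3>1/3$ this space \emph{does} admit a bounded restriction to the one-dimensional $C^1$ curve $\gamma$ with values in $L^2(\gamma)$. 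That intermediate regularity, not continuity up to the boundary, is what makes the trace on $\gamma$ legitimate while keeping the bound proportional to $\delta$.
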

This result will be proven in Section~\ref{sec:map}.
Let us mention that, as a consequence of the theorem, the operator $\Td$ is a continuous mapping from $H^2(I)$ to $L^2(0,1)$, which will determine our choice of topologies. \\

For the stable solution of the inverse problem \eqref{eq:pip} with perturbed operator and data,
we then consider Tikhonov regularization 
\begin{align} \label{eq:tikhs}
A_\alpha^\delta = \arg\min_{…\stackrel{A \in H^2(I)}{A(\gmin)=0}} \|\Td A - y^\delta\|^2_{L^2(\gamma)} + \alpha \|A''\|_{L^2(I)}^2 + \alpha \|A\|_{L^2(I)}^2.
\end{align}
The choice of the set of admissible parameters and of the regularization term is driven by Theorem~\ref{thm:map} and \ref{thm:perT}.
Using the definition $A(u) = \int_{g_{min}}^u a(w) \d w$, one can formulate an equivalent problem 
for the parameter $a$, which is the form we use for our numerical tests. 
Since the Tikhonov functional in \eqref{eq:tikhs} is continuous, quadratic, and strictly convex,
the regularized solution $A_\alpha^\delta$ is uniquely defined. 
Moreover, the discretization and numerical minimization are straight forward \cite{KinNeu88}. 
Using the results of Theorem~\ref{thm:map} and \ref{thm:perT}, and arguments of regularization in Hilbert scales \cite{Egg06,Nat84,Neu88,Tau96}, we will prove in Section~\ref{sec:reghs} the following convergence rates result for the regularization method \eqref{eq:tikhs}.
\begin{theorem} \label{thm:rates}
Let (A1)-(A7) hold.
Then for $\alpha \approx \delta^2$, we have
\begin{align} \label{eq:rates}
 \|A-A_\alpha^\delta\|_{H^2(I)} = O(1) 
 \quad \text{and} \quad 
 \|A-A_\alpha^\delta\|_{L^2(I)} = O(\delta).
\end{align}
\end{theorem}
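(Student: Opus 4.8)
The plan is to apply the general theory of Tikhonov regularization in Hilbert scales with perturbations in both operator and data, using Theorems~\ref{thm:map} and \ref{thm:perT} as the two key inputs. Set $X = L^2(I)$ as the base space, and note that the regularization term $\|A''\|_{L^2(I)}^2 + \|A\|_{L^2(I)}^2$ is (equivalent to) the square of the norm in the Hilbert scale space $X_2 = H^2(I)$ (intersected with the affine constraint $A(\gmin)=0$, which is a closed condition compatible with the subspace structure). By Theorem~\ref{thm:map}, the exact operator $T$ satisfies a two-sided estimate $c_T\|A\|_{X} \le \|TA\|_{L^2(0,1)} \le C_T\|A\|_X$, i.e.\ $T$ is an isomorphism onto its range behaving like the identity in the scale at level $0$; equivalently $T^*T$ is, up to constants, like the identity on $X$, so the smoothing index relative to the Hilbert scale is $a=0$ and the solution lives in $X_2$, giving the natural source condition "for free" (this is precisely the point emphasized in the abstract).

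First I would make the variational characterization precise: $A_\alpha^\delta$ solves the normal equations $\big((\Td)^*\Td + \alpha L^*L\big)A_\alpha^\delta = (\Td)^*y^\delta$, where $L$ is the operator whose graph norm gives $H^2(I)$ (so $L^*L \simeq I + (\cdot)''''$ with the appropriate boundary conditions), and the exact solution $A = A^\dagger$ satisfies $TA = y$. Subtracting, write
\begin{align*}
\big((\Td)^*\Td + \alpha L^*L\big)(A_\alpha^\delta - A)
= (\Td)^*(y^\delta - y) + (\Td)^*(T - \Td)A - \alpha L^*L A.
\end{align*}
Then I would test this identity with $A_\alpha^\delta - A$ and estimate the three right-hand terms. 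The last term $\alpha\langle L^*LA, A_\alpha^\delta - A\rangle = \alpha\langle LA, L(A_\alpha^\delta-A)\rangle \le \alpha\|LA\|\,\|L(A_\alpha^\delta-A)\|$ uses only $A \in H^2(I)$, which is guaranteed by (A3) (indeed $A$ is the antiderivative of $a \in H^1(\RR)$, so $A'' = a' \in L^2$). For the first two terms I would use Theorem~\ref{thm:perT}: $\|(T-\Td)(A_\alpha^\delta-A)\|_{L^2(0,1)} \le C\delta\|A_\alpha^\delta - A\|_{H^2(I)}$ and $\|y - y^\delta\| \le C\delta$, together with the uniform boundedness $\|\Td w\|_{L^2} \le \|Tw\|_{L^2} + \|(T-\Td)w\|_{L^2} \le (C_T + C\delta)\|w\|_{H^2(I)}$ coming from Theorems~\ref{thm:map} and \ref{thm:perT}. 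Writing $e = A_\alpha^\delta - A$, $\mu = \|\Td e\|_{L^2(0,1)}$ and $\nu = \|Le\| \simeq \|e\|_{H^2(I)}$, the tested identity yields an inequality of the shape
\begin{align*}
\mu^2 + \alpha\nu^2 \le C\delta\big(\mu + \delta\nu + \delta + \alpha^{1/2}\nu/\delta\cdot\alpha^{1/2}\big) \cdot(\ldots),
\end{align*}
which, after Young's inequality and absorbing the $\delta\nu$ and $\mu$ terms into the left side (legitimate for $\delta$ small, using $\alpha \approx \delta^2$ so that $\delta\nu \le \tfrac12\alpha^{1/2}\nu\cdot(\delta/\alpha^{1/2})$ stays controlled), gives $\mu^2 + \alpha\nu^2 \le C(\delta^2 + \alpha\|LA\|^2)$. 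With $\alpha\approx\delta^2$ this immediately produces $\|e\|_{H^2(I)} = \nu = O(1)$ and $\|\Td e\|_{L^2(0,1)} = \mu = O(\delta)$.

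For the $L^2$-rate I would then interpolate: from $\|\Td e\|_{L^2} = O(\delta)$ and $\|e\|_{H^2(I)} = O(1)$ I recover the $L^2(I)$-bound by combining the lower bound in Theorem~\ref{thm:map} for the exact operator, $\|e\|_{L^2(I)} \le c_T^{-1}\|Te\|_{L^2(0,1)} \le c_T^{-1}\big(\|\Td e\|_{L^2(0,1)} + \|(T-\Td)e\|_{L^2(0,1)}\big) \le c_T^{-1}\big(O(\delta) + C\delta\|e\|_{H^2(I)}\big) = O(\delta)$. This is the standard "Hilbert scale interpolation at index $0$" step, trivial here because the smoothing index is $0$. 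The main obstacle, and the step requiring the most care, is the bookkeeping in the absorption argument: one must check that with the perturbed operator $\Td$ the coercivity on the base space is not destroyed — i.e.\ that $\|\Td e\|$ still controls $\|e\|_{L^2(I)}$ up to an $O(\delta\|e\|_{H^2})$ error — and that the coupling constant between $\delta\nu$ and $\alpha^{1/2}\nu$ is genuinely small, which is exactly why the choice $\alpha \approx \delta^2$ (rather than a smaller power) is forced. Once the perturbation estimates of Theorem~\ref{thm:perT} are invoked in the right places, the remaining manipulations are the routine Young/absorption steps of regularization theory.
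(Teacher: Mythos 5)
Your proposal is correct in substance, but it takes a genuinely different technical route from the paper. The paper never works with the normal equations for $A$ directly: it substitutes $z = L A$, $B = T L^{-1}$, $\Bd = \Td L^{-1}$, so that the penalty becomes the plain $\tX_0$-norm of $z$, and then runs the classical spectral-filter argument ($z_\alpha^\delta = g_\alpha(\Bd^*\Bd)\Bd^* y^\delta$ with $g_\alpha(\lambda)=(\lambda+\alpha)^{-1}$, $r_\alpha = 1-\lambda g_\alpha$), treating the operator perturbation by replacing $y$ with $\bar y = \Bd z$ and absorbing $\|(B-\Bd)z\|\le \tilde C\delta\|z\|_{\tX_0}$ into the effective data error (Lemma~\ref{lem:rateshs}); the $L^2$ rate then comes from the residual bound together with the interpolation inequality \eqref{eq:intpol} (Lemma~\ref{lem:esths}). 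You instead test the perturbed normal equations with $e = A_\alpha^\delta - A$ and use Young's inequality, which is an equally valid and arguably more elementary energy argument; your recovery of the $L^2$ rate from $\|\Td e\| = O(\delta)$, $\|e\|_{H^2}=O(1)$ and the lower bound $c_T\|e\|_{L^2(I)}\le\|Te\|$ of Theorem~\ref{thm:map} is exactly the degenerate ($r=1$) case of the paper's interpolation step and is fine. What the paper's spectral formulation buys is the extension to higher source conditions $z\in\tX_u$ discussed in Section~\ref{sec:gen}, which your variational argument does not give as directly. One cosmetic criticism: your intermediate ``inequality of the shape'' display is garbled, and the worry about absorbing a $\delta\nu$ term is unnecessary — in the tested identity the perturbation term is $\langle (T-\Td)A, \Td e\rangle \le C\delta\|A\|_{H^2(I)}\,\mu$, acting on the \emph{fixed} exact $A$, so the clean outcome $\mu^2+\alpha\nu^2 \le C(\delta^2 + \alpha\|L^2A\|^2)$ follows by Young's inequality alone, with no absorption of $\nu$ needed; the only place $\delta\|e\|_{H^2}$ enters is the final $L^2$ step, where $\|e\|_{H^2}=O(1)$ is already known.
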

By interpolation, one can then get rates for the convergence in the $H^s$-norm with $0 < s \le 2$.
Note that the second estimate in \eqref{eq:rates} is actually a conditional well-posedness result.  
Differentiation of $A(u) = \int_{\gmin}^u a(w) \d w$ and interpolation finally yields our main result:
\begin{theorem} \label{thm:main}
Let the assumptions of Theorem~\ref{thm:rates} hold. Then
\begin{align} \label{eq:ratesa}
  \|a-a_\alpha^\delta\|_{H^1(I)} = O(1) 
 \quad \text{and} \quad 
 \|a-a_\alpha^\delta\|_{L^2(I)} = O(\sqrt{\delta}).
\end{align}
\end{theorem}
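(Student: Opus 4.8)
The plan is to transfer the convergence rates for the antiderivative $A$ established in Theorem~\ref{thm:rates} to the parameter $a$ itself, using the identity $a = A'$ on $I$ together with an interpolation argument. First I would set $a_\alpha^\delta := (A_\alpha^\delta)'$ on $I$, which is a well-defined element of $H^1(I)$ since $A_\alpha^\delta \in H^2(I)$, and which is the natural definition in view of $A(u) = \int_{\gmin}^u a(w)\d w$; likewise $a = A' \in H^1(I)$ by (A3). Hence $a - a_\alpha^\delta = (A - A_\alpha^\delta)'$, and since differentiation maps $H^k(I)$ into $H^{k-1}(I)$ with operator norm at most one,
\begin{align*}
\|a - a_\alpha^\delta\|_{H^1(I)} \le \|A - A_\alpha^\delta\|_{H^2(I)}
\qquad\text{and}\qquad
\|a - a_\alpha^\delta\|_{L^2(I)} \le \|A - A_\alpha^\delta\|_{H^1(I)} .
\end{align*}

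The first bound in \eqref{eq:ratesa} then follows at once from $\|A - A_\alpha^\delta\|_{H^2(I)} = O(1)$ in Theorem~\ref{thm:rates}. For the second, I would use that $H^1(I) = [L^2(I), H^2(I)]_{1/2}$ with equivalent norms on the bounded interval $I$, so that the interpolation inequality
\begin{align*}
\|w\|_{H^1(I)} \le C\,\|w\|_{L^2(I)}^{1/2}\,\|w\|_{H^2(I)}^{1/2}
\end{align*}
holds for all $w \in H^2(I)$ with a constant $C$ depending only on $I$. Applying this to $w = A - A_\alpha^\delta$ and combining with the two estimates in \eqref{eq:rates} gives $\|A - A_\alpha^\delta\|_{H^1(I)} = O(\delta^{1/2}) \cdot O(1) = O(\sqrt\delta)$, hence $\|a - a_\alpha^\delta\|_{L^2(I)} = O(\sqrt\delta)$. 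The same interpolation inequality, applied with exponent $\theta = s/2$, moreover yields $\|A - A_\alpha^\delta\|_{H^s(I)} = O(\delta^{\,1 - s/2})$ for $0 \le s \le 2$, and after differentiation the corresponding intermediate rates $\|a - a_\alpha^\delta\|_{H^t(I)} = O(\delta^{(1-t)/2})$ for $0 \le t \le 1$, as announced after Theorem~\ref{thm:rates}.

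There is no substantial obstacle in this final step; the only points that need a word of care are that $I$ is a bounded interval, so that the full-norm interpolation inequality above is available and all embedding constants are finite and independent of $\delta$, and that the rate cannot be pushed beyond $O(\sqrt\delta)$ by this route, since $H^1(I)$ sits exactly halfway between the space $L^2(I)$ in which the $O(\delta)$ rate holds and the space $H^2(I)$ in which only uniform boundedness is guaranteed. This loss of half a power of $\delta$ upon differentiation is intrinsic and is consistent with the $O(\sqrt\delta)$ rate obtained in \cite{Kuegler03} under considerably stronger smoothness and boundary assumptions on $a$.
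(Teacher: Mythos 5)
Your proof is correct and follows essentially the same route as the paper, whose own argument for Theorem~\ref{thm:main} is precisely ``differentiation and interpolation'': you interpolate the $O(\delta)$ rate in $L^2(I)$ against the $O(1)$ bound in $H^2(I)$ to get $\|A-A_\alpha^\delta\|_{H^1(I)}=O(\sqrt\delta)$ and then differentiate. The only (cosmetic) difference is that you invoke the classical Sobolev interpolation inequality on $I$ directly, whereas the paper obtains the same intermediate rate from Lemma~\ref{lem:esths} with $r=1/2$ inside the Hilbert scale $\tX_s$; the two are equivalent since the $\tX_s$ norms agree with the $H^{s+1}(I)$ norms on the relevant subspaces.
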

Let us shortly discuss the assertion of this theorem: Apart from assumption (A3), which was already needed to make the forward problem and the measurement setup well-defined, we do not require any additional (source) condition here. 
As a consequence of converse results \cite{EHN96}, we can therefore only get a uniform bound for the error in the $H^1$ norm, or arbitrarily slow convergence. Due to the precise characterization of the mapping properties in \eqref{eq:mapT}, we do however obtain quantitative convergence estimates in weaker norms. 
Using stronger source conditions, it is possible to obtain the first bound in \eqref{eq:rates} in a stronger norm and, as a consequence, to improve the convergence rate in the second estimate of Theorem~\ref{thm:main}. This follows by standard arguments of regularization in Hilbert scales, and we will comment on this in some detail in Section~\ref{sec:num}. \\

The remainder of the manuscript is devoted to the proofs of the results presented in this section.

\section{Solvability of the governing boundary value problem}
\label{sec:solvability}

Let us start with an auxiliary result concerning the well-posedness of the transformed boundary value problem \eqref{eq:bvp1}--\eqref{eq:bvp2}.
\begin{lemma} \label{lem:solv2}
Let (A1)--(A2) hold. 
Then \eqref{eq:bvp1}--\eqref{eq:bvp2} has a strong solution $v \in W^{2,p}(\Omega)$ which is 
continuously differentiable up to the boundary and unique up to constants.
In particular, there exists a unique solution with zero average.
\end{lemma}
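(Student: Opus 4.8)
The plan is to treat \eqref{eq:bvp1}--\eqref{eq:bvp2} as a pure Neumann problem for the Laplacian and combine the classical variational solution with elliptic regularity up to the boundary. First I would work in the quotient space $H^1(\Omega)/\RR$ (equivalently, in $\{w \in H^1(\Omega) : \int_\Omega w \d x = 0\}$) and consider the weak formulation $\int_\Omega \nabla v \cdot \nabla \varphi \d x = \int_{\partial\Omega} j \varphi \d s$ for all $\varphi \in H^1(\Omega)$. The compatibility condition $\int_{\partial\Omega} j \d s = 0$ from (A2) makes the right-hand side a well-defined bounded linear functional on the quotient space, and coercivity follows from the Poincar\'e--Wirtinger inequality. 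By Lax--Milgram there is a unique zero-average weak solution $v \in H^1(\Omega)$, and any other weak solution differs from it by a constant.

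Next I would upgrade the regularity. Since $\partial\Omega$ is $C^{1,1}$ by (A1) and the Neumann datum satisfies $j \in W^{1-1/p,p}(\partial\Omega)$ with $p > 3$ by (A2), the standard $L^p$ elliptic regularity theory for the Neumann problem (e.g. Agmon--Douglis--Nirenberg, or the treatment in Gilbarg--Trudinger / Gris\'vard adapted to $C^{1,1}$ domains) gives $v \in W^{2,p}(\Omega)$ together with an estimate $\|v\|_{W^{2,p}(\Omega)} \le C(\|j\|_{W^{1-1/p,p}(\partial\Omega)} + \|v\|_{L^p(\Omega)})$. One has to note here that $C^{1,1}$ regularity of the boundary is exactly the borderline needed for $W^{2,p}$ estimates. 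Finally, since $\Omega \subset \RR^3$ and $p > 3$, the Sobolev embedding $W^{2,p}(\Omega) \hookrightarrow C^{1}(\overline\Omega)$ (indeed into $C^{1,1-3/p}(\overline\Omega)$) yields that $v$ is continuously differentiable up to the boundary. The uniqueness-up-to-constants statement is inherited from the weak formulation, and the zero-average normalization picks out a canonical representative, which proves the "in particular" clause.

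The only genuine subtlety — and the step I would be most careful about — is invoking $W^{2,p}$ elliptic regularity for the Neumann problem on a merely $C^{1,1}$ domain with a fractional-order Besov/Sobolev datum $j \in W^{1-1/p,p}(\partial\Omega)$, and checking that the trace space is the correct one so that the estimate closes; this is where I would cite the appropriate regularity theorem precisely rather than reprove it. Everything else (Lax--Milgram, Poincar\'e--Wirtinger, Sobolev embedding) is routine.
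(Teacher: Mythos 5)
Your proposal is correct and follows essentially the same route as the paper's proof: Lax--Milgram on the zero-average subspace (using the compatibility condition from (A2)) for existence and uniqueness up to constants, then $W^{2,p}$ elliptic regularity on the $C^{1,1}$ domain with $j \in W^{1-1/p,p}(\partial\Omega)$, and finally the Sobolev embedding $W^{2,p}(\Omega) \subset C^1(\overline\Omega)$ for $p>3$ in three dimensions. You simply spell out the details (coercivity via Poincar\'e--Wirtinger, the precise regularity estimate) that the paper leaves to the cited references.
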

\begin{proof}
The existence of a unique weak solution 
$v_0 \in H^1(\Omega)$ with zero average follows from the Lax-Milgram theorem, 
and the set of  solutions is given by $v_0 + \RR$. Since $j \in W^{1-1/p,p}(\partial \Omega)$ and since $\Omega$ is sufficiently regular, we obtain $v \in W^{2,p}(\Omega)$ with $p>3$ by regularity results for elliptic equations \cite{GT} and 
$W^{2,p}(\Omega) \subset C^1(\overline{\Omega})$ by embedding \cite{Adams75}.
\end{proof}
To conclude the solvability of the original problem \eqref{eq:quasi}--\eqref{eq:quasi_bc}, we use the following statement which follows immediately from the chain rule.
\begin{lemma} \label{lem:equivalence}
Let (A1)--(A3) hold and $A(t) = \int_{\gmin}^t a(w) \d w$ for some $\gmin \in \RR$. 
Then $u \in H^1(\Omega)$ is a solution of \eqref{eq:quasi}--\eqref{eq:quasi_bc}, if, and only if, 
the function $v(x) =  A(u(x))$ solves \eqref{eq:bvp1}--\eqref{eq:bvp2}. 
\end{lemma}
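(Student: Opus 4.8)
The plan is to reduce everything to the pointwise chain-rule identity $\nabla v = a(u)\nabla u$ and then to observe that, once this identity is available, the weak formulations of \eqref{eq:quasi}--\eqref{eq:quasi_bc} and of \eqref{eq:bvp1}--\eqref{eq:bvp2} coincide verbatim, so that one holds if and only if the other does. First I would record the properties of $A$ that follow from (A3): since $a \in H^1(\RR)$ is a function of one variable, $a$ is (absolutely) continuous, and together with the bounds $c_a \le a \le c_a^{-1}$ this makes $A(t)=\int_{\gmin}^t a(w)\d w$ a $C^1$ function with $A'=a$ that is globally Lipschitz, $|A(s)-A(t)| \le c_a^{-1}|s-t|$, and strictly increasing. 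These are precisely the hypotheses under which the chain rule for Sobolev functions is applicable.

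Next I would invoke the chain rule for the composition of a Sobolev function with a $C^1$ map of bounded derivative (see, e.g., \cite{GT}): for $u \in H^1(\Omega)$ the composition $v = A(u)$ again lies in $H^1(\Omega)$ and satisfies $\nabla v = A'(u)\nabla u = a(u)\nabla u$ almost everywhere. This is the heart of the matter and the only genuinely technical point; the boundedness of $a$ guarantees $v \in H^1(\Omega)$, while the positivity $a \ge c_a$ makes the substitution reversible via $u = A^{-1}(v)$ should one wish to pass back. I expect the main (and really the only) obstacle to be the rigorous justification of this chain rule, i.e.\ verifying that $v = A(u) \in H^1(\Omega)$ and that the gradient identity holds a.e.; everything else is formal.

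With the identity $a(u)\nabla u = \nabla v$ in hand, the equivalence is immediate. Writing out the weak form of \eqref{eq:quasi}--\eqref{eq:quasi_bc}, namely $\int_\Omega a(u)\nabla u \cdot \nabla\phi \d x = \int_{\partial\Omega} j\phi \d s$ for all $\phi \in H^1(\Omega)$, and substituting $a(u)\nabla u = \nabla v$ turns it into $\int_\Omega \nabla v \cdot \nabla\phi \d x = \int_{\partial\Omega} j\phi \d s$, which is exactly the weak form of \eqref{eq:bvp1}--\eqref{eq:bvp2}. Since the two variational identities involve the same test functions and the same right-hand side, they are equivalent, which is the claimed statement. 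For the classical reading of the two problems one argues identically but pointwise: using the regularity $u \in W^{1,p}(\Omega) \subset C^1(\overline\Omega)$ from Theorem~\ref{thm:solvability} and $v \in W^{2,p}(\Omega) \subset C^1(\overline\Omega)$ from Lemma~\ref{lem:solv2}, the identity $a(u)\nabla u = \nabla v$ holds up to the boundary, so taking the divergence matches \eqref{eq:quasi} with \eqref{eq:bvp1} and taking the normal component matches \eqref{eq:quasi_bc} with \eqref{eq:bvp2}.
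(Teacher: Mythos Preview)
Your argument is correct and is precisely the approach the paper has in mind: the paper simply says the lemma ``follows immediately from the chain rule,'' and what you have written is a careful unpacking of that one-line justification, supplying the Sobolev chain-rule identity $\nabla v = a(u)\nabla u$ and then matching the weak formulations.
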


\paragraph{\bf Proof of Theorem~\ref{thm:solvability}:} 
Due to (A3) the function $A$ is continuously differentiable and strictly monotone. 
Using $u(x) = A^{-1}(v(x))$, we get 
\begin{align*}
\nabla u(x) = \frac{1}{A'(A^{-1}(v(x))} \nabla v(x) = \frac{1}{a(u(x))} \nabla v(x),
\end{align*}
which already shows that $u \in W^{1,p}(\Omega) \subset C(\overline{\Omega})$.
By the continuous embedding of $H^1(\RR) \subset C(\RR)$, the parameter $a$ is at least continuous, and therefore $\nabla u$ is continuous as well by the above formula. \hfill \qed

\section{Mapping properties of the forward operator}
\label{sec:map}

\paragraph{\bf Proof of Theorem~\ref{thm:map}:}
Setting $h(s) := g(\gamma(s))$, we get by the transformation formula for integrals
 \begin{align*}
 \|T A\|_{L^2(0,1)}^2
 &= \int_0^1 |A(h(s))|^2 \d s
  = \int_{\gmin}^{\gmax} |A(u)|^2 \frac{1}{|h'(h^{-1}(u))|} \d u \\
 &= \int_{\gmin}^{\gmax} |A(u)|^2  w(u) \d u.
\end{align*}

%
By assumption (A5), we know that $w$ is uniformly positive and bounded from above and below. Consequently,
\begin{align*}
\|T A\|_{L^2(0,1)} = \|A \sqrt{w}\|_{L^2(I)} \approx \|A\|_{L^2(I)}.
\end{align*}
This completes the proof of Theorem~\ref{thm:map}. \hfill \qed \\

The norm equivalence stated in Theorem~\ref{thm:map} shows that the inverse problem 
\eqref{eq:ip} with sufficiently regular exact data is well-posed, 
if $T$ is considered as an operator from $L^2(I)$ to $L^2(0,1)$.
As we will see next, this property can no longer be used in the presence of measurement errors.\\

\paragraph{\bf Proof of Theorem~\ref{thm:perT}:}
Let $T^\delta$ and $y^\delta$ be defined as in \eqref{eq:opT} but with perturbed Dirichlet and Neumann data $g^\delta$ and $j^\delta$. 
As before, define $h(s) = g(\gamma(s))$ and $h^\delta(s) = g^\delta(\gamma(s))$. Then 
\begin{align*}
|(\Td A)(s) - (T A)(s)| = |\int_{h(s)}^{h^\delta(s)} a(u) \d u| \le \|a\|_{L^\infty(I)} |h^\delta(s) - h(s)|. 
\end{align*}
The first estimate now follows from 
$\|a\|_{L^\infty} = \|A\|_{W^{1,\infty}} \le C \|A\|_{H^2}$, where we used the 
definition of $A$ and an embedding theorem. 
 
For the second estimate, let $v$ and $v^\delta$ be the zero average solutions of \eqref{eq:bvp1}--\eqref{eq:bvp2} 
for Neumann data $j$ and $j^\delta$, respectively. Then by the usual a-priori estimates for elliptic problems \cite{GT}, we obtain
\begin{align*}
 \|v - v^\delta\|_{W^{1,3}(\Omega)} \le C' \|j-j^\delta\|_{L^{2}(\partial \Omega)}
\end{align*}
and by standard trace estimates \cite{Adams75} and assumption (A6), we get 
\begin{align*}
\|v - v^\delta\|_{L^2(\gamma)} \le c  \|v - v^\delta\|_{W^{1,3}(\Omega)} \le cC' \delta. 
\end{align*}
This completes the proof of Theorem~\ref{thm:perT}. \hfill \qed

\section{A scale of Hilbert spaces}
\label{sec:hs}

Motivated by Theorem~\ref{thm:perT}, we will consider the operator $T$ as a mapping from 
a subspace of $X \subset H^2(I)$ into $ Y = L^2(0,1)$. This will allow us to control the effect of the measurement errors. 
To completely define the appropriate preimage space $X$, we proceed as follows: 
Consider the operator $L^4 : A \mapsto A''''+A$ with domain
\begin{align*} 
 D(L^4) = \{& A \in H^4(I) : \\
            & A(\gmin) = A''(\gmin) = A''(\gmax) = A'''(\gmax) = 0\}.
\end{align*}
The properties of this operator are summarized in 
\begin{lemma} \label{lem:L4}
The operator $L^4 : D(L^4) \subset L^2(I) \to L^2(I)$ is a densely-defined, self-adjoint, and strictly positive linear operator.
\end{lemma}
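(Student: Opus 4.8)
The plan is to prove the three asserted properties of $L^4$ by standard arguments for higher-order self-adjoint operators defined via boundary conditions. First I would establish that $L^4$ is densely defined: the domain $D(L^4)$ contains $C_c^\infty(\text{int } I)$, and this is dense in $L^2(I)$, so density is immediate. Strict positivity follows from integration by parts: for $A \in D(L^4)$, we compute $\langle L^4 A, A\rangle_{L^2(I)} = \int_I (A''''+A)A\,\d u = \|A\|_{L^2(I)}^2 + \int_I A'''' A\,\d u$, and integrating by parts twice produces boundary terms that vanish by the four conditions $A(\gmin)=A''(\gmin)=A''(\gmax)=A'''(\gmax)=0$ (one checks that each pairing $[A''' A]$ and $[A'' A']$ at the endpoints is killed by one of the imposed conditions), leaving $\|A\|_{L^2(I)}^2 + \|A''\|_{L^2(I)}^2 \ge \|A\|_{L^2(I)}^2$. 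Hence $\langle L^4 A, A\rangle \ge \|A\|_{L^2(I)}^2$, which gives strict positivity (and in fact a coercive lower bound).

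The main work is self-adjointness. I would first verify symmetry: for $A, B \in D(L^4)$, integration by parts in $\int_I A'''' B - \int_I A B''''$ yields boundary terms of the form $[A'''B - A''B' + A'B'' - AB''']_{\gmin}^{\gmax}$; I would check that every one of these eight endpoint terms contains a factor that vanishes by the boundary conditions on either $A$ or $B$ (at $\gmin$: the conditions are on $A,A''$ and $B,B''$, which kill the $A'''B$, $A'B''$ and symmetric terms; at $\gmax$: the conditions are on $A'',A'''$ and $B'',B'''$, killing $A''B'$, $AB'''$ and symmetric terms — one has to pair them up carefully). This gives $L^4 \subset (L^4)^*$. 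To upgrade symmetry to self-adjointness, the cleanest route is to show $L^4$ is surjective onto $L^2(I)$ — equivalently that $0 \in \rho(L^4)$ — since a symmetric operator with a bounded everywhere-defined inverse is self-adjoint. For this I would set up the weak formulation: given $f \in L^2(I)$, the bilinear form $b(A,B) = \int_I A''B'' + \int_I AB$ on the closed subspace $V = \{A \in H^2(I): A(\gmin)=0\}$ is bounded and coercive (coercivity uses a Poincaré-type inequality on $V$, valid because of the clamped condition $A(\gmin)=0$), so Lax–Milgram produces a unique weak solution $A \in V$. Then elliptic regularity for this fourth-order problem gives $A \in H^4(I)$, and testing against smooth functions recovers both the equation $A''''+A=f$ and the natural boundary conditions $A''(\gmin)=A''(\gmax)=A'''(\gmax)=0$, so $A \in D(L^4)$. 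Thus $(L^4)^{-1}$ is a bounded operator on $L^2(I)$, and self-adjointness follows.

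The step I expect to be the main obstacle — or at least the one demanding the most care — is the bookkeeping of the boundary terms: one must confirm that the particular asymmetric set of conditions (two at $\gmin$ on $A$ and $A''$, two at $\gmax$ on $A''$ and $A'''$) is exactly what is needed both to kill all boundary contributions in the symmetry computation and to be the natural boundary conditions arising from the variational formulation on $V=\{A(\gmin)=0\}$. Concretely, $A(\gmin)=0$ is the essential condition, while $A''(\gmin)=0$, $A''(\gmax)=0$, $A'''(\gmax)=0$ should emerge as natural conditions from integrating by parts in $\int_I A''B''$ — I would double-check that the count matches (one essential + three natural = four, the right number for a fourth-order operator) and that no additional condition at $\gmax$ is implicitly forced. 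Once this is pinned down, everything else is routine: Lax–Milgram, the embedding $H^2(I)\subset C^1(\overline I)$ to make point evaluations of $A,A',A''$ sensible, and the standard fact that a densely-defined symmetric operator with bounded inverse defined on all of the Hilbert space is self-adjoint.
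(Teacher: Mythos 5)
Your proposal is correct, and it is in fact more complete than the paper's own argument. The paper's proof consists of exactly your first two ingredients: the integration-by-parts computation showing $(L^4u,v)=(u,L^4v)$ for $u,v\in D(L^4)$ (with the same bookkeeping of boundary terms --- at each step one factor vanishes because of one of the four conditions $A(\gmin)=A''(\gmin)=A''(\gmax)=A'''(\gmax)=0$, imposed on $u$ or on $v$ as needed), and the identity $(L^4u,u)=\|u''\|^2+\|u\|^2>0$ for positivity. Where you go further is on self-adjointness: the paper simply asserts ``moreover, $D(L^4)=D({L^4}^*)$'' after the symmetry computation, without justification, whereas you supply the standard missing step --- surjectivity of $L^4$ via Lax--Milgram for the form $b(A,B)=\int_I A''B''+\int_I AB$ on $V=\{A\in H^2(I):A(\gmin)=0\}$, one-dimensional elliptic regularity to land in $H^4$, and recovery of $A''(\gmin)=A''(\gmax)=A'''(\gmax)=0$ as the natural boundary conditions, so that a symmetric operator with bounded everywhere-defined inverse is self-adjoint. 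Your count of essential versus natural conditions is exactly right, and this is the honest way to close the argument. One small quibble: coercivity of $b$ on $V$ does not actually need a Poincar\'e inequality tied to the clamped condition $A(\gmin)=0$, since $b(A,A)=\|A''\|^2+\|A\|^2$ already controls the full $H^2$ norm via the interpolation inequality $\|A'\|^2\le C(\|A''\|^2+\|A\|^2)$ valid on any bounded interval; the clamped condition is only needed so that $V$ is the right trial space producing the correct natural conditions. This does not affect the validity of your proof.
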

\begin{proof}
Let $u,v \in D(L^4)$ and let $(u,v) = \int_I u v \, dx$, then
\begin{align*}
 (L^4 u, v) 
 &= (u'''',v) + (u,v)
  = -(u''',v') + \underbrace{u''' v|_\gmin^\gmax}_{=0} + (u,v) \\
 &= (u'',v'') - \underbrace{u'' v'|_\gmin^\gmax}_{=0} +(u,v) 
  = -(u',v''') + \underbrace{u' v''|_\gmin^\gmax}_{=0} + (u,v) \\
 &= (u,v'''') - \underbrace{u v'''|_\gmin^\gmax}_{=0} + (u,v)
  = (u, L^4 v).
\end{align*}
Thus $L^4$ is formally self-adjoint and, moreover, $D(L^4) = D({L^4}^*)$.
The positivity of $L^4$ results from 
\begin{align*}
 (L^4 u, u) = (u'',u'') + (u,u) > 0,
\end{align*}
which holds for all non-trivial functions $u \in D(L^4)$. 
\end{proof}

By spectral calculus and interpolation \cite{Rudin}, we can now define arbitrary powers of $L^4$. 
In particular, there exists a unique positive $4$th root of $L^4$, which we denote by $L$, with domain
\begin{align} \label{eq:domL}
 D(L) = \{ u \in H^1(I) : u(\gmin) = 0\}.
\end{align}
This is again a densely defined self-adjoint unbounded linear operator. 
In the usual manner \cite{EHN96,KreinPetunin}, we obtain for $s \ge 0$ the Hilbert spaces
\begin{align}
 X_s = D(L^s) = \overline{\bigcap_{n\geq 0} D(L^n)}^{\|\cdot\|_{X_s}},
\end{align}
where the closure is taken w.r.t. to the norm $\|u\|_{X_s} = \|L^s u\|_{L^2(I)}$.
The corresponding spaces for negative index $s < 0$ are defined by duality. 
This yields
\begin{lemma} \label{lem:tXs}
The family $\{X_s\}_{s \in \RR}$ is a Hilbert scale and for  
any $s,t \in \RR$, the operator $L^s : X_{t+s} \to X_t$ is an isomorphism. 
Moreover, for any $r \le s \le t$ with $r < t$ there holds the \emph{interpolation inequality}
\begin{align} \label{eq:intpol}
\|L^s u\|_{X_0} \le \|L^r u\|^\frac{t-s}{t-r}_{X_0} \|L^t u\|^\frac{s-r}{t-r}_{X_0}.
\end{align}
\end{lemma}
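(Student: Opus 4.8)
The plan is to invoke the standard theory of Hilbert scales generated by an unbounded self-adjoint operator (see \cite{EHN96,KreinPetunin}), after checking that $L$ meets the required hypotheses, and then to obtain the interpolation inequality directly from the spectral theorem. First I would record that by Lemma~\ref{lem:L4} the operator $L^4$ is densely defined, self-adjoint and strictly positive, and in fact $L^4 \ge I$ because $(L^4 u,u) = \|u''\|_{X_0}^2 + \|u\|_{X_0}^2 \ge \|u\|_{X_0}^2$; by the spectral mapping theorem its positive fourth root $L$ is again densely defined and self-adjoint, with spectrum contained in $[1,\infty)$, so $L \ge I$, $0\notin\sigma(L)$, and $L^{-1}$ is bounded. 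The common core $\bigcap_{n\ge 0} D(L^n)$ contains $\bigcup_{N} E_{[1,N]}X_0$ and is therefore dense in every $X_s$. This is precisely the structure for which $\{X_s\}_{s\ge 0}$, $X_s = D(L^s)$ with $\|u\|_{X_s} = \|L^s u\|_{X_0}$, together with the duality spaces $X_s = (X_{-s})'$ for $s<0$, forms a Hilbert scale: the continuity and density of the embeddings $X_s \hookrightarrow X_t$ for $s>t$ follow from $L\ge I$ and the density of the common core, and the scale property is completed by the interpolation inequality below.

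For the isomorphism statement, fix $s,t\in\RR$. On the common core one computes, via the functional calculus, $\|L^s u\|_{X_t} = \|L^t L^s u\|_{X_0} = \|L^{t+s} u\|_{X_0} = \|u\|_{X_{t+s}}$, so $L^s$ is an isometry for the relevant norms and hence extends to an isometry of the completions $X_{t+s}\to X_t$; since $L$ is boundedly invertible, $L^{-s}$ furnishes a two-sided inverse, so $L^s:X_{t+s}\to X_t$ is an isometric isomorphism. For negative indices one verifies that this extension is consistent with the duality definition of $X_s$, which is routine since $L$ is self-adjoint.

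Finally, for the interpolation inequality, fix $r\le s\le t$ with $r<t$ and put $\theta = (s-r)/(t-r)\in[0,1]$, so $s = (1-\theta)r + \theta t$. Let $\{E_\lambda\}$ be the spectral resolution of $L$ and let $d\mu_u(\lambda) = d\|E_\lambda u\|_{X_0}^2$, a finite positive measure supported in $[1,\infty)$; then $\|L^\sigma u\|_{X_0}^2 = \int \lambda^{2\sigma}\,d\mu_u(\lambda)$ for every $\sigma\in\RR$ and every $u\in X_{\max(r,s,t)}$. Since $\lambda^{2s} = (\lambda^{2r})^{1-\theta}(\lambda^{2t})^{\theta}$, Hölder's inequality with exponents $1/(1-\theta)$ and $1/\theta$ yields
\begin{align*}
\|L^s u\|_{X_0}^2 = \int (\lambda^{2r})^{1-\theta}(\lambda^{2t})^{\theta}\,d\mu_u(\lambda)
\le \left(\int \lambda^{2r}\,d\mu_u\right)^{1-\theta}\left(\int \lambda^{2t}\,d\mu_u\right)^{\theta}
= \|L^r u\|_{X_0}^{2(1-\theta)}\,\|L^t u\|_{X_0}^{2\theta},
\end{align*}
and taking square roots, with $1-\theta = (t-s)/(t-r)$, gives exactly \eqref{eq:intpol}; the degenerate cases $\theta\in\{0,1\}$ are trivial. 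I do not anticipate a genuine obstacle here: the only points needing care are that $L$ inherits strict positivity and the bound $L\ge I$ from $L^4$, and that the duality definition of $X_s$ for $s<0$ is compatible with the spectral picture; the remainder is standard Hilbert-scale machinery.
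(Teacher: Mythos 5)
Your proof is correct and follows essentially the same route as the paper, which simply delegates the argument to the standard construction of Hilbert scales generated by a densely defined, self-adjoint, strictly positive operator (citing \cite{EHN96}, Sec.~8.4). You have merely written out the details that the paper leaves to the reference: the bound $L\ge I$ inherited from $L^4$, the isometry property of $L^s$ via the functional calculus, and the interpolation inequality via H\"older's inequality applied to the spectral measure.
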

The proof of this statements follows more or less directly from the construction; see \cite[Sec.~8.4]{EHN96} for details. 
For our analysis, we will utilize the shifted Hilbert scale consisting of spaces
\begin{align} \label{eq:Xs}
 \tX_s := X_{s+1}.
\end{align}
Note that $\{\tX_s\}_{s \in \RR}$ is again a scale of Hilbert spaces with norms 
\begin{align} \label{eq:normXs}
\|u\|_{\tX_s} := \|L^{s+1} u\|_{L^2(I)}. 
\end{align}
The cases $s \in \{-1,0,1\}$ will be of particular importance for our analysis below. 
Let us therefore summarize some facts about these spaces.
\begin{lemma} \label{lem:X1}
We have $\tX_0 = \{u \in H^1(I) : u(0)=0\}$. Moreover,
\begin{align*}
\tX_{-1} = L^2(I)
\quad \text{and} \quad  
\tX_1 = \{u \in H^2(I) : u(0)=0\}, 
\end{align*}
and the corresponding norms are given by
\begin{align*}
\|u\|_{\tX_{-1}} = \|u\|_{L^2(I)} 
\quad \text{and} \quad
\|u\|_{\tX_1}^2 = \|u''\|^2_{L^2(I)} + \|u\|_{L^2(I)}^2.
\end{align*}
\end{lemma}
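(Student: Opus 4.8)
The plan is to treat the three spaces one at a time, since the content is very unevenly distributed. For $\tX_{-1}$ nothing is to prove: by definition $\tX_{-1} = X_0 = D(L^0) = L^2(I)$, with norm $\|u\|_{\tX_{-1}} = \|L^0 u\|_{L^2(I)} = \|u\|_{L^2(I)}$. Likewise $\tX_0 = X_1 = D(L)$, so the asserted description $\tX_0 = \{u \in H^1(I): u(\gmin)=0\}$ is exactly \eqref{eq:domL}, recorded when $L$ was constructed as the positive fourth root of $L^4$ (and, if needed, provable by the same interpolation argument used below for $X_2$). The whole substance of the lemma therefore lies in the identification $\tX_1 = X_2 = D(L^2) = \{u \in H^2(I): u(\gmin)=0\}$ together with the formula $\|L^2 u\|_{L^2(I)}^2 = \|u''\|_{L^2(I)}^2 + \|u\|_{L^2(I)}^2$; here I use that, by construction of the scale in Lemma~\ref{lem:tXs}, $X_2 = D(L^2)$.

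For the space, the key point is that $L^2$ is the unique positive square root of the self-adjoint, strictly positive operator $L^4$ (Lemma~\ref{lem:L4}), so $D(L^2) = D((L^4)^{1/2})$ is the \emph{form domain} of $L^4$, i.e.\ the completion of $D(L^4)$ in the form norm $\|u\|_b := (L^4 u, u)^{1/2}$. The integration-by-parts computation already carried out in the proof of Lemma~\ref{lem:L4} gives $(L^4 u, u) = \|u''\|_{L^2(I)}^2 + \|u\|_{L^2(I)}^2$ for $u \in D(L^4)$, and by an elementary Ehrling-type estimate on the bounded interval $I$ (no boundary condition needed) this is equivalent to $\|u\|_{H^2(I)}^2$. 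Hence $D(L^2)$ is the $H^2(I)$-closure of $D(L^4)$, and it remains to compute that closure. I would show $\overline{D(L^4)}^{\,H^2(I)} = \{u \in H^2(I): u(\gmin)=0\}$: the inclusion ``$\subseteq$'' is immediate since $u \mapsto u(\gmin)$ is continuous on $H^2(I) \hookrightarrow C^1(\overline{I})$, so that set is $H^2$-closed; for ``$\supseteq$'' one approximates a given $u$ with $u(\gmin)=0$ by smooth functions vanishing at $\gmin$, then adds $H^2$-small corrections supported in small one-sided neighbourhoods of $\gmin$ and $\gmax$ to enforce $u''(\gmin)=u''(\gmax)=u'''(\gmax)=0$ --- these conditions are not stable under $H^2$-convergence, hence are erased in the closure, while $u(\gmin)=0$ survives. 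Alternatively this follows from the interpolation identity $X_2 = [X_0,X_4]_{1/2}$ (see \cite{KreinPetunin} and \cite[Sec.~8.4]{EHN96}) and the trace theorem, which retains precisely the boundary conditions of order strictly below $3/2$.

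For the norm, for $u \in D(L^4) = D((L^2)^2)$ self-adjointness of $L^2$ gives $\|u\|_{\tX_1}^2 = \|L^2 u\|_{L^2(I)}^2 = (L^4 u, u) = \|u''\|_{L^2(I)}^2 + \|u\|_{L^2(I)}^2$; since $D(L^4)$ is dense in $\tX_1$ and both sides are continuous in the $\tX_1$-norm, this identity extends to all of $\tX_1$, completing the proof. I expect the one genuinely delicate step to be the closure computation in the second paragraph --- verifying that passing to the form domain of $L^4$ kills the higher-order boundary conditions but keeps $u(\gmin)=0$; everything else is routine, and this step can, if one prefers, be outsourced entirely to the interpolation-space machinery the paper already invokes.
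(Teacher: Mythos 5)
Your proof is correct and follows essentially the same route as the paper: the norm identity $\|L^2u\|_{L^2}^2=(L^4u,u)=\|u''\|_{L^2}^2+\|u\|_{L^2}^2$ on $D(L^4)$ via the integration by parts from Lemma~\ref{lem:L4}, followed by density of $X_4$ in $X_2$. The only difference is that you carefully carry out the closure computation (showing the higher-order boundary conditions are erased while $u(\gmin)=0$ survives), a step the paper compresses into the single phrase ``follows by density of $X_4\subset X_2$''.
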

\begin{proof}
The assertions about the spaces $\tX_0$ and $\tX_{-1}$ 
follow directly from the construction of the spaces $\tX_s = X_{s+1}$.  
By definition, the norm of the space $\tX_1 = X_2$ is given by
\begin{align*}
\|u\|_{\tX_1}^2 
  &= (L^2 u, L^2 u) = (L^4 u, u) = (u'''',u) + (u,u)\\
  &= (u'',u'') + (u,u) = \|u''\|^2_{L^2(I)} + \|u\|_{L^2(I)}^2,
\end{align*}
which holds for all $u \in X_4 = D(L^4)$. 
The form of the space $\tX_1$ then follows by density of $X_4 \subset X_2=\tX_1$.
\end{proof}

Note that, apart from some boundary conditions, the shifted Hilbert spaces $\tX_s$ coincide with the Sobolev spaces $H^{s+1}(I)$.

\section{Regularization in Hilbert scales}
\label{sec:reghs}

With $T$ and $\Td$ corresponding to exact and perturbed Dirichlet data, and 
with $L$ as in the previous section, let us define operators
\begin{align}
B = T L^{-1} 
\quad  \text{and} \quad 
\Bd = \Td L^{-1}.
\end{align}
The operators are understood as mappings from $X \! = \! \tX_0$ to $Y \! = \!L^2(0,1)$.
The inverse problem \eqref{eq:ip} with exact data is then equivalent to 
\begin{align} \label{eq:iphs}
 B z &= y, \qquad A = L^{-1} z.
\end{align}
Let us next translate the mapping properties of the operator $T$ 
into the Hilbert scale framework.
\begin{lemma} \label{lem:mappingB}
Let (A1)--(A5) hold. Then
\begin{align} \label{eq:mapB}
 c_T \|z\|_{\tX_{-2}} \le \|B z\|_{Y} \le C_T \|z\|_{\tX_{-2}}
\qquad \text{for all } z \in \tX_0.
\end{align}
If, in addition, the bounds (A6)--(A7) on the data noise hold, then
\begin{align} \label{eq:pertB}
 \|(\Bd - B) z\|_Y \le \tilde C \delta \|z\|_{\tX_0}.
\end{align}
\end{lemma}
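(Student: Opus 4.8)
The plan is to deduce both claims from the already-established results about $T$ (Theorems~\ref{thm:map} and \ref{thm:perT}) together with the mapping properties of the operator $L$ from the Hilbert scale $\{\tX_s\}$. The key bookkeeping fact is that $L^{-1}$ is an isomorphism $\tX_s \to \tX_{s+1}$ for every $s$, by Lemma~\ref{lem:tXs} (applied in the shifted scale). In particular, writing $z \in \tX_0$ and setting $A = L^{-1}z$, we have $A \in \tX_1$ with $\|A\|_{\tX_1} \approx \|z\|_{\tX_0}$, and also $A \in \tX_{-1} = L^2(I)$ with $\|A\|_{L^2(I)} = \|A\|_{\tX_{-1}} \approx \|L^{-1} z\|_{\tX_{-1}} \approx \|z\|_{\tX_{-2}}$, again by the isomorphism property. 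These two identifications are the bridge between the $L^2$- and $H^2$-type estimates for $T$ and the $\tX$-indexed estimates claimed for $B$.

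For the norm equivalence \eqref{eq:mapB}: by definition $Bz = TL^{-1}z = TA$, so Theorem~\ref{thm:map} gives $c_T\|A\|_{L^2(I)} \le \|Bz\|_Y \le C_T\|A\|_{L^2(I)}$. Since $\|A\|_{L^2(I)} = \|L^{-1}z\|_{\tX_{-1}}$ and $L^{-1}:\tX_{-2}\to\tX_{-1}$ is an isometry-up-to-constants (in fact, by the definition of the norms $\|L^{-1}z\|_{\tX_{-1}} = \|L^0 z\|_{L^2(I)}$... one must be a little careful here: $\|w\|_{\tX_{-1}} = \|L^0 w\|_{L^2} = \|w\|_{L^2}$ and $\|L^{-1}z\|_{\tX_{-1}} = \|L^{-1}z\|_{L^2} = \|L^{-2}z\|_{\tX_1}$; more directly $\|L^{-1}z\|_{\tX_{-1}} = \|L^{0}L^{-1}z\|_{L^2} = \|L^{-1}z\|_{L^2}$, and $\|z\|_{\tX_{-2}} = \|L^{-1}z\|_{L^2}$ by \eqref{eq:normXs} with $s=-2$). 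So in fact $\|A\|_{L^2(I)} = \|z\|_{\tX_{-2}}$ exactly, and \eqref{eq:mapB} follows immediately with the same constants $c_T, C_T$.

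For the perturbation estimate \eqref{eq:pertB}: write $(\Bd - B)z = (\Td - T)L^{-1}z = (\Td-T)A$. Apply the first estimate of Theorem~\ref{thm:perT}, which gives $\|(\Td-T)A\|_{L^2(0,1)} \le C\delta\|A\|_{H^2(I)}$. It remains to control $\|A\|_{H^2(I)}$ by $\|z\|_{\tX_0}$. Since $A = L^{-1}z$ with $z \in \tX_0$, we have $A \in \tX_1$, and by Lemma~\ref{lem:X1}, $\|A\|_{\tX_1}^2 = \|A''\|_{L^2(I)}^2 + \|A\|_{L^2(I)}^2$, which together with the Poincaré-type control of $\|A'\|_{L^2}$ (using $A(\gmin)=0$, valid since $A \in D(L) \subset \tX_0$... actually $A \in \tX_1$ here) gives $\|A\|_{H^2(I)} \le C'\|A\|_{\tX_1}$. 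Finally $\|A\|_{\tX_1} = \|L^{-1}z\|_{\tX_1} = \|z\|_{\tX_0}$ by the definition of the norms. Chaining these yields $\|(\Bd-B)z\|_Y \le \tilde C \delta\|z\|_{\tX_0}$ with $\tilde C = C C'$.

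The only genuine subtlety — and the step I would spell out most carefully — is the comparison of the Sobolev norm $\|\cdot\|_{H^2(I)}$ with the Hilbert-scale norm $\|\cdot\|_{\tX_1}$: one needs that the intermediate derivative $\|A'\|_{L^2(I)}$ is dominated by $\|A''\|_{L^2(I)} + \|A\|_{L^2(I)}$ on the subspace with $A(\gmin)=0$, which is an elementary interpolation/Poincaré argument but should be invoked explicitly (and the remark after Lemma~\ref{lem:X1} that $\tX_1$ coincides with $\{u\in H^2(I):u(0)=0\}$ as a normed space already records exactly this). Everything else is a routine assembly of the isomorphism property of $L$ in the scale with the two theorems on $T$; no new analytic input is required.
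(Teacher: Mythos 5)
Your proposal is correct and follows essentially the same route as the paper: both reduce the claims for $B=TL^{-1}$ to Theorems~\ref{thm:map} and~\ref{thm:perT} via the substitution $A=L^{-1}z$, using $\|A\|_{L^2(I)}=\|z\|_{\tX_{-2}}$ for \eqref{eq:mapB} and the equivalence $\|A\|_{H^2(I)}\approx\|A\|_{\tX_1}=\|z\|_{\tX_0}$ (which the paper attributes to Ehrling's lemma and Lemma~\ref{lem:X1}, and which you justify by the same interpolation/Poincar\'e observation) for \eqref{eq:pertB}. The subtlety you flag about comparing the $H^2$ norm with the $\tX_1$ norm is exactly the point the paper also singles out.
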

\begin{proof}
By Theorem~\ref{thm:map}, and with $B=TL^{-1}$ and $z=L A$, we have
\begin{align*}
\|B z\|_Y 
 = \|T A\|_Y \approx \|A\|_{L^2(I)} 
 = \|L^{-1} z \|_{X_{0}} = \|z\|_{\tX_{-2}},
\end{align*}
which already yields the first result. The second assertion follows from
\begin{align*}
\|\Bd z - B z\|_Y 
 &= \|\Td A - T A\|_Y \le C \delta \|A\|_{H^2(I)}  \\
 & \approx \delta \|A\|_{\tX_{1}} = \delta \|z\|_{\tX_{0}}. 
\end{align*}
Here we used that the norm of $H^2(I)$ is equivalent to the norm of $\tX_{1}$ for all functions in $\tX_1$ by Ehrling's lemma and Lemma \ref{lem:X1}.
\end{proof}
%
%
%
%
Tikhonov regularization \eqref{eq:tikhs} for the perturbed problem can now be phrased equivalently as
\begin{align} \label{eq:tik..}
z_\alpha^\delta = \arg\min_{z \in \tX_0} \|\Bd z - y^\delta\|_Y^2 + \alpha \|z\|_{\tX_{0}}^2, \qquad A_\alpha^\delta = L^{-1} z_\alpha^\delta.
\end{align}
Recall that $\tX_0 = X_1$ corresponds to $H^1(I)$ plus boundary condition, so we are essentially regularizing the $H^1(I)$ norm of the function $z$. The error of the regularized solution can then be  bounded by 
\begin{lemma} \label{lem:rateshs}
Let (A1)--(A7) hold. Then for $\alpha \approx \delta^2$ 
\begin{align} \label{eq:rateshs}
 \|z_\alpha^\delta-z\|_{\tX_0} = O(1) 
\quad \text{and} \quad 
 \|B z_\alpha^\delta - B z\|_{Y} = O(\delta).
\end{align}
\end{lemma}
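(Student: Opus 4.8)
The plan is to derive the two bounds in \eqref{eq:rateshs} from the classical error analysis of Tikhonov regularization, carefully tracking the perturbation of the operator. Recall that $z_\alpha^\delta$ solves the normal equations $((\Bd)^*\Bd + \alpha I)z_\alpha^\delta = (\Bd)^* y^\delta$, where the adjoint is taken with respect to the $\tX_0$-inner product. Writing $z$ for the exact solution of $Bz = y$ and using $y^\delta = Bz + (y^\delta-y)$ together with $\Bd = B + (\Bd-B)$, I would first obtain the standard decomposition
\begin{align*}
 z_\alpha^\delta - z = -\alpha((\Bd)^*\Bd+\alpha I)^{-1} z + ((\Bd)^*\Bd+\alpha I)^{-1}(\Bd)^*\bigl(y^\delta - y + (B-\Bd)z\bigr).
\end{align*}
The operator-norm bounds $\|((\Bd)^*\Bd+\alpha I)^{-1}(\Bd)^*\| \le \tfrac{1}{2\sqrt\alpha}$ and $\|\Bd((\Bd)^*\Bd+\alpha I)^{-1}(\Bd)^*\| \le 1$ (valid for any bounded operator) are the workhorses here.

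Next I would estimate the data-plus-operator perturbation term. By Theorem~\ref{thm:perT}, or rather its Hilbert-scale form \eqref{eq:pertB} in Lemma~\ref{lem:mappingB}, together with \eqref{eq:perturbation}, we have $\|y^\delta-y\|_Y \le C\delta$ and $\|(B-\Bd)z\|_Y \le \tilde C\delta\|z\|_{\tX_0}$; since $z = LA$ with $A$ the exact antiderivative, which lies in $\tX_1 \subset \tX_0$ by (A3) and Lemma~\ref{lem:X1}, the quantity $\|z\|_{\tX_0}$ is a fixed finite constant. Hence the whole inner bracket is $O(\delta)$, and applying the two operator-norm bounds gives that the perturbation contribution to $\|z_\alpha^\delta-z\|_{\tX_0}$ is $O(\delta/\sqrt\alpha) = O(1)$ for $\alpha\approx\delta^2$, while its contribution to $\|\Bd(z_\alpha^\delta-z)\|_Y$ is $O(\delta)$. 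For the regularization (bias) term $-\alpha((\Bd)^*\Bd+\alpha I)^{-1}z$, I would use $\|\alpha((\Bd)^*\Bd+\alpha I)^{-1}\| \le 1$ to get the crude bound $\|{-}\alpha((\Bd)^*\Bd+\alpha I)^{-1}z\|_{\tX_0} \le \|z\|_{\tX_0} = O(1)$, and $\|\alpha\Bd((\Bd)^*\Bd+\alpha I)^{-1}\| \le \tfrac{\sqrt\alpha}{2}$ to bound its image under $\Bd$ by $O(\sqrt\alpha\,\|z\|_{\tX_0}) = O(\delta)$. Combining the two contributions yields $\|z_\alpha^\delta-z\|_{\tX_0} = O(1)$ and $\|\Bd(z_\alpha^\delta-z)\|_Y = O(\delta)$.

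Finally I would pass from $\Bd$ back to $B$ in the residual estimate: $\|B(z_\alpha^\delta-z)\|_Y \le \|\Bd(z_\alpha^\delta-z)\|_Y + \|(B-\Bd)(z_\alpha^\delta-z)\|_Y$, and the second term is bounded by $\tilde C\delta\|z_\alpha^\delta-z\|_{\tX_0} = O(\delta)$ using \eqref{eq:pertB} and the $O(1)$ bound just obtained. This closes the argument. I expect the main obstacle to be the bookkeeping around the operator perturbation: unlike the textbook situation with a fixed operator, here one must be careful that the ``source-type'' quantity controlling the bias is $\|z\|_{\tX_0}$ rather than something like $\|L z\|$, which is precisely why only an $O(1)$ (not a rate) bound is available in $\tX_0$ — this reflects the absence of any source condition beyond $A\in\tX_1$, exactly as the discussion after Theorem~\ref{thm:main} anticipates. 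The estimates on $\|(B-\Bd)z\|$ and $\|(B-\Bd)(z_\alpha^\delta-z)\|$ both need the $\tX_0$-norm to be finite, which is guaranteed by (A3) for $z$ and by the just-derived bound for $z_\alpha^\delta-z$, so the argument does not circle.
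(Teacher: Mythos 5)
Your proposal is correct and follows essentially the same route as the paper: the decomposition $z_\alpha^\delta - z = -\alpha((\Bd)^*\Bd+\alpha I)^{-1}z + ((\Bd)^*\Bd+\alpha I)^{-1}(\Bd)^*\bigl(y^\delta-y+(B-\Bd)z\bigr)$ is exactly the paper's splitting into $g_\alpha(\Bd^*\Bd)\Bd^*(y^\delta-\bar y) - r_\alpha(\Bd^*\Bd)z$ with $\bar y=\Bd z$, and the subsequent spectral bounds, the $O(\sqrt{\alpha})$ estimate for the image of the bias term under $\Bd$, and the final passage from $\Bd$ back to $B$ via \eqref{eq:pertB} and the already-established $O(1)$ bound all match the paper's argument. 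No gaps.
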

\begin{proof}
Set $\bar y:=\Bd z$ and note that by Lemma~\ref{lem:mappingB}
\begin{align} \label{eq:perty}
 \|y^\delta - \bar y\|_Y \le \|y^\delta - y\|_Y + \|(B-\Bd) z\|_Y \le (1+\tilde C) \delta.
\end{align}
As usual \cite{EHN96}, we define $g_\alpha(\lambda) = \frac{1}{\lambda + \alpha}$ and $r_\alpha(\lambda) = 1-\lambda g_\alpha(\lambda)$.
Then the regularized solution can be expressed as $z_\alpha^\delta = g_\alpha(\Bd^* \Bd) \Bd^* y^\delta$ and we further set $\bar z_\alpha = g_\alpha(\Bd^* \Bd) \Bd^* \bar y$.
Note that $\Bd^*$ denotes the adjoint of $\Bd$ w.r.t. the spaces $\tX_0$ and $Y$ here. 
The regularization error can now be expressed as 
\begin{align}
z_\alpha^\delta - z 
&= (z_{\alpha}^\delta - \bar z_\alpha) + (\bar z_\alpha - z) \label{eq:decomp} \\ 
&= g_\alpha (\Bd^*\Bd) \Bd^* (y^\delta - \bar y) - r_\alpha(\Bd^* \Bd) z. \notag
\end{align}
The standard spectral estimates yield $\|g_\alpha (\Bd^*\Bd) \Bd^*\| \le C \alpha^{-1/2}$ and $\|r_\alpha(\Bd^* \Bd)\| \le C$, where $C$ only depends on the norm of $\Bd$.
Together with \eqref{eq:perty}, this yields
\begin{align} \label{eq:est0}
\|z_\alpha^\delta-z\|_{\tX_0} \le C \alpha^{-1/2} (\tilde C+1)  \delta + C \|z\|_{\tX_0} = O(1), 
\end{align}
where we used the parameter choice $\alpha \approx \delta^2$ for the last step.

Let us now estimate the residual $\|B z_\alpha^\delta - B z\|_{Y}$:
Using the error decomposition \eqref{eq:decomp}, the estimate \eqref{eq:pertB} for the perturbation, and spectral estimates, we get
\begin{align*}
&\|B z_{\alpha}^\delta - B z \|_Y  
\le\|(B-\Bd) (z_\alpha^\delta - z)\|_Y + \|\Bd z_\alpha^\delta - \Bd z\|_Y \\
&\quad 
  \preceq \delta \|z_\alpha^\delta - z\|_{\tX_0}
  + \|\Bd g_\alpha (\Bd^*\Bd) \Bd^* (y^\delta - \bar y)\|_Y 
 + \|\Bd r_\alpha(\Bd^* \Bd) z\|_Y \\
&\quad 
  \preceq \delta \|z_\alpha^\delta - z\|_{\tX_0} + \|y^\delta - \bar y\|_Y + \alpha^{1/2} \|z\|_{\tX_0}. 
\end{align*}
For ease of notation, we wrote $a \preceq b$ with the meaning $a \prec C b$ for some constant $C$ here. The result now follows by using the bound \eqref{eq:est0}, the estimate \eqref{eq:perty}, and the a-priori choice $\alpha \approx \delta^2$ for the regularization parameter.
%
%
\end{proof}
The spectral equivalence \eqref{eq:mapB} and an interpolation argument yield
\begin{lemma} \label{lem:esths}
Under the assumptions of Lemma~\ref{lem:rateshs}, there holds
\begin{align} \label{eq:esths}
\|z_\alpha^\delta - z\|_{\tX_{-2r}} = O(\delta^{r}) 
\qquad \text{for all} \quad 0 \le r \le 1. 
\end{align}
\end{lemma}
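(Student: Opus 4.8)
The plan is to combine the two bounds from Lemma~\ref{lem:rateshs} with the interpolation inequality \eqref{eq:intpol} of the Hilbert scale. Lemma~\ref{lem:rateshs} gives control of the error $z_\alpha^\delta - z$ in two norms: in $\tX_0$ (a bound of order $O(1)$) and, after translating the residual bound via the spectral equivalence \eqref{eq:mapB}, in $\tX_{-2}$. Indeed, the lower bound in \eqref{eq:mapB} reads $c_T \|w\|_{\tX_{-2}} \le \|B w\|_Y$ for $w \in \tX_0$, so applying it to $w = z_\alpha^\delta - z$ and using the second estimate in \eqref{eq:rateshs} yields $\|z_\alpha^\delta - z\|_{\tX_{-2}} = O(\delta)$. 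Thus we have two endpoint estimates, one at scale index $0$ and one at scale index $-2$.

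Next I would interpolate between these two endpoints. Writing the error $e := z_\alpha^\delta - z$, we want to bound $\|e\|_{\tX_{-2r}}$ for $0 \le r \le 1$. Recalling $\tX_s = X_{s+1}$ and $\|u\|_{\tX_s} = \|L^{s+1}u\|_{X_0}$, the three relevant quantities are $\|L u\|_{X_0} = \|e\|_{\tX_0}$, $\|L^{-1} e\|_{X_0} = \|e\|_{\tX_{-2}}$, and $\|L^{1-2r} e\|_{X_0} = \|e\|_{\tX_{-2r}}$. These correspond to exponents $t = 1$, $r_0 = -1$, and $s = 1 - 2r$ respectively, and since $0 \le r \le 1$ we have $-1 \le 1-2r \le 1$, i.e.\ $r_0 \le s \le t$ with $r_0 < t$, so the interpolation inequality \eqref{eq:intpol} of Lemma~\ref{lem:tXs} applies (with $e$ in place of $u$). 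It gives
\begin{align*}
\|e\|_{\tX_{-2r}} = \|L^{1-2r} e\|_{X_0}
 \le \|L^{-1} e\|_{X_0}^{\frac{t-s}{t-r_0}} \|L e\|_{X_0}^{\frac{s-r_0}{t-r_0}}
 = \|e\|_{\tX_{-2}}^{r} \, \|e\|_{\tX_{0}}^{1-r},
\end{align*}
since $(t-s)/(t-r_0) = (1-(1-2r))/(1-(-1)) = 2r/2 = r$ and $(s-r_0)/(t-r_0) = 1-r$. Plugging in the two endpoint bounds $\|e\|_{\tX_{-2}} = O(\delta)$ and $\|e\|_{\tX_0} = O(1)$ then yields $\|e\|_{\tX_{-2r}} = O(\delta^r \cdot 1^{1-r}) = O(\delta^r)$, which is exactly \eqref{eq:esths}.

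There is no serious obstacle here; the lemma is a routine consequence of the preceding work. The only points requiring a little care are bookkeeping with the index shift $\tX_s = X_{s+1}$ so that the correct exponents are fed into \eqref{eq:intpol}, and checking that the error $e = z_\alpha^\delta - z$ indeed lies in the spaces involved — it lies in $\tX_0$ by construction of the regularized solution in \eqref{eq:tik..} and of the exact solution, hence in every $\tX_s$ with $s \le 0$, so the interpolation inequality is legitimately applicable. One should also note that the implied constants in $O(\delta^r)$ are uniform in $r \in [0,1]$, which follows since both endpoint constants are finite and the exponents in the interpolation inequality are bounded.
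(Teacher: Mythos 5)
Your proof is correct and follows exactly the paper's route: translate the residual bound $\|B(z_\alpha^\delta-z)\|_Y = O(\delta)$ into $\|z_\alpha^\delta-z\|_{\tX_{-2}} = O(\delta)$ via the spectral equivalence \eqref{eq:mapB}, then interpolate against the $O(1)$ bound in $\tX_0$ using \eqref{eq:intpol}. The only difference is that you spell out the index bookkeeping that the paper leaves implicit.
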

\begin{proof}
Note that 
$\|B (z_\alpha^\delta-z)\|_Y \approx \|z_\alpha^\delta-z\|_{\tX_{-2}}$. The result now follows from the previous lemma and the interpolation inequality \eqref{eq:intpol}.
\end{proof}

\paragraph{{\bf Proof of Theorem~\ref{thm:rates} and \ref{thm:main}}}
Using that $A_\alpha^\delta = L^{-1} z_\alpha^\delta$, Theorem~\ref{thm:rates} is a direct consequence of Lemma~\ref{lem:esths} with $r=0$ and $r=1$, respectively. Theorem~\ref{thm:main} finally follows by differentiation and interpolation. \hfill \qed

\section{Discussion and numerical illustration}
\label{sec:num}

Before turning to numerical tests, let us shortly discuss some generalizations
 that will explain which results we can expect.

\subsection{Remarks and generalizations} \label{sec:gen}

The convergence results of Theorem~\ref{thm:rates} can be strengthened as follows: If $z \in \tX_u$ for some $0 \le u \le 1$, then for the parameter choice $\alpha \approx \delta^{\frac{4}{u+2}}$, one has
\begin{align*} 
 \|z_\alpha^\delta - z\|_{\tX_u} = O(1)     
\quad \text{and} \quad  
 \|z_\alpha^\delta - z\|_{\tX_0} = O(\delta).
\end{align*}
For $\Bd=B$, corresponding statements can be found in \cite{Egg06,Nat84}. 
In the presence of perturbations of the operator, the proof of these estimates follows similar to that of Lemma~\ref{lem:rateshs}. Here, we additionally employ the spectral equivalence $L^u \sim (B^* B)^{u/4}$, which follows from Lemma~\ref{lem:mappingB}, and the estimate 
$$
\|(B^* B)^\mu - (\Bd^* \Bd)^\mu\| \preceq \|B-B^\delta\|^\mu \le c_\mu \delta^\mu,
$$ 
which holds for $|\mu| \le 1/2$ and can be deduced from \eqref{eq:pertB} by interpolation; see \cite{Egg06,KalNeuSch08} for details. 
Using the relation between $a$, $A$, and $z$, and the definition of the norms of $X_s$ and $\tX_s$, we then obtain by interpolation
\begin{align*}
\|a_\alpha^\delta-a\|_{L^2(I)} \approx   \|A_\alpha^\delta-A\|_{X_1}  = \|z_\alpha^\delta - z\|_{\tX_{-1}} = O(\delta^{\frac{u+1}{u+2}}).
\end{align*}
For $u=0$ we recover the result of Lemma~\ref{lem:rateshs} and Theorem~\ref{thm:main}.

As the following argument shows, the optimal rates that one can expect in practice will however be limited:
A condition $z \in \tX_u$ amounts to $A \in X_{u+2} \subset H^{u+2}(I)$. 
If we require this for $u>1/2$, we will implicitly impose additional 
boundary conditions $A''(\gmin) = A''(\gmax)=0$, which leads to 
the assumption $a'(\gmin) = a'(\gmax)=0$ on the parameter $a$. Similar conditions have been used in \cite{Kuegler03} to derive the convergence statement. Note, however, that such conditions will usually not be valid in practice and we can therefore not expect a source condition $z \in X_u$ to hold 
for $u > 1/2$. 
The best possible rate that can be expected for smooth parameters, will then be 
$
\|a_\alpha^\delta - a\|_{L^2} 
= O(\delta^{3/5}), 
$
which results for $u = 1/2$ and a parameter choice $\alpha = \delta^{8/5}$. 
We will in fact observe these rates in our numerical tests.  

Let us finally mention, that the regularization parameter $\alpha$ can also be chosen a-posteriori, e.g., by the discrepancy principle. This follows from the estimates for the residual in Lemma~\ref{lem:rateshs}; see \cite{Neu88} for details.

\subsection{Setup and discretization of the test problem}

Let us now describe the setup of our numerical tests. 
We let $\Omega$ be the unit ball and consider the curve
\begin{align*}
\gamma : \big[\frac{\pi}{4},\frac{3\pi}{4} \big] \to \RR^3,  
\qquad s \mapsto (\cos s, \sin s,0)
\end{align*}
for taking the measurements.
We assume that $u$ is given as the solution of \eqref{eq:quasi} with Dirichlet 
boundary conditions $u(x,y,z) = x$ on $\partial \Omega$. 
The interval of states that are attained on $\gamma$ then is $I=[\gmin,\gmax] = [-\frac{1}{\sqrt{2}},\frac{1}{\sqrt{2}}]$.  
As exact parameter we choose $a(u)=1+u^2$. Its antiderivative is given by $A(u)=u+u^3/3+C$ with integration constant $C$ such that $A(-\frac{1}{\sqrt{2}})=0$.
The exact data for \eqref{eq:bvp3} are then given by
\begin{align*}
 h(s) = g(\gamma(s)) = \cos(s)
 \quad \text{and} \quad
 y(s) = A(h(s)).
\end{align*}
Note that according to \eqref{eq:bvp3}, these are the only data that are required for the solution of the inverse problem. 

For the numerical solution, we employ a discretized version of the Tikhonov functional \eqref{eq:tikhs}.
The integral required for the data misfit term  is approximated by numerical quadrature with $500$ integration points and the required data $h^\delta$ and $y^\delta$ are computed by evaluation of the exact functions $h$ and $y$ on the corresponding integration points and by adding uniformly distributed random noise of size $\delta$.
Let us note that besides the perturbations that were added deliberately to the data, our numerical tests also suffer from discretization errors which may become dominant when we proceed to small noise levels. 
The function $a$ is represented by a continuous piecewise linear spline $a_h$ over a uniform partition of the interval $I^\delta = [\min h^\delta, \max h^\delta]$
into $200$ elements. The antiderivative $A_h$ is computed by numerical integration on a finer discretization.

\subsection{Numerical test results}  

To illustrate the validity of Theorem~\ref{thm:main}, we will report on the errors 
$$
{\rm err}_0 = \|a_\alpha^\delta - a\|_{L^2(I)} \quad \text{and}\quad {\rm err}_1 = \|a_\alpha^\delta-a\|_{H^1(I)}
$$
obtained in our numerical tests.
Note that by definition of $A=\int a \d u$ and $z=L A$, and of the norms of $\tX_s$, we have 
${\rm err}_0 \approx \|z_\alpha^\delta - z\|_{\tX_{-1}}$ and ${\rm err}_1 \approx \|z_\alpha^\delta - z\|_{\tX_0}$.
For the a-priori parameter choice $\alpha=\delta^2$, we therefore 
expect convergence rates $\text{err}_1 = O(1)$ and $\text{err}_0=O(\delta^{1/2})$. 
In view of the remarks of Section~\ref{sec:gen} and the smoothness of the exact parameter $a$, we can in fact obtain a better rate $\text{err}_0=O(\delta^{3/5})$ and $\text{err}_1=O(\delta^{1/5})$ with a parameter choice $\alpha \approx \delta^{8/5}$.
The errors observed in our numerical experiments are listed in Table~\ref{tab:res}.
For completeness, we also list the residual errors $\text{res}=\|T^\delta A_\alpha^\delta - y^\delta\|$ which should be in the order of the noise level $\delta$. 

\begin{table}[ht]
\begin{center}
 {\small 
 \begin{tabular}{c||c|c|c||c|c|c} 
$\delta$ & err$_0$ & err$_1$ & res & err$_0$ & err$_1$ & res\\ 
 \hline
 1e-2    &  0.036672  & 0.51336  &  0.010445 & 0.037113 & 0.50677 & 0.101159\\
 1e-3    &  0.008765  & 0.32007  &  0.009861 & 0.009421 & 0.32653 & 0.001003\\
 1e-4    &  0.002147  & 0.22529  &  0.000094 & 0.002691 & 0.21059 & 0.000098\\
 1e-5    &  0.001071  & 0.23345  &  0.000008 & 0.000792 & 0.13419 & 0.000009\\
 1e-6    &  0.000235  & 0.06543  &  0.000001 & 0.000245 & 0.05828 & 0.000001\\
 \end{tabular}
}
 \medskip
 \setlength\captionindent{1.4em}
 \caption{\label{tab:res} \small Errors and residual observed in numerical tests for the parameter choice $\alpha=\delta^2$ (left) and $\alpha = 0.1 \times \delta^{8/5}$ (right). 
 The results for noise level $\delta=10^{-6}$ are affected by inverse crime and change when  
 repeating the test with a finer discretization.}
\end{center}
\end{table}
As expected, we cannot see a clear convergence behavior 
for the error in the $H^1$ norm for the parameter choice $\alpha \approx \delta^2$, while we observe a clear convergence of the error in the $L^2$ norm.
The small errors of the last line can be explained by an inverse crime, i.e., the finite dimension of the discretization limits the error propagation. This effect vanishes when refining the grid used for the discretization. 
The parameter choice $\alpha \approx \delta^{8/5}$ yields
a better convergence behavior for both, the error in the $H^1$ and the $L^2$ norm. This is fully explained by our remarks made in Section~\ref{sec:gen}. 
In both cases, the residual is in the order of the noise level. 
For a better comparison, we also display the numerical results 
and the theoretical rates in Figure~\ref{fig:res} in a double logarithmic plot.
\begin{figure}[ht]
\begin{center}
\includegraphics[width=0.48\textwidth]{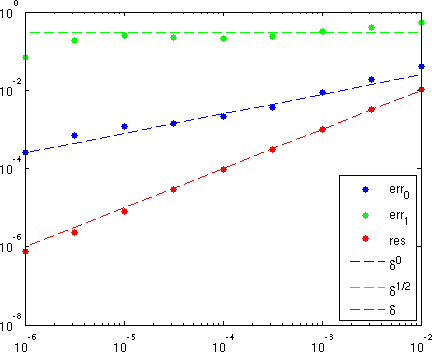} 
\hfill
\includegraphics[width=0.48\textwidth]{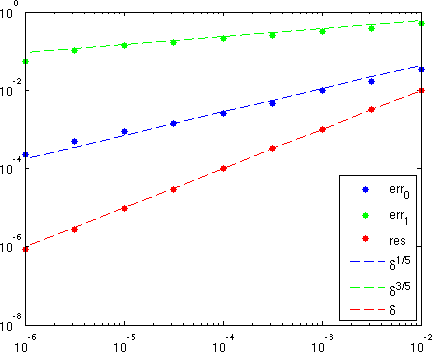} 
\setlength\captionindent{1.4em}
\caption{\label{fig:res} \small Convergence plots for the error in $H^1$ and $L^2$ norm and the residual for parameter choices $\alpha=\delta^2$ (left) and $\alpha=\delta^{8/5}$ (right). The dashed lines illustrate the theoretical rates.}
\end{center}
\end{figure}
One can clearly see that the numerical results are in good agreement with the 
theoretical rates. As mentioned above, the errors for the smallest noise level 
are biased by the finite dimension of the numerical experiment.

\section{Summary}

We considered the determination of the diffusion parameter in a quasi-linear elliptic equation from 
a single set of overspecified boundary data. Based on a reformulation as a linear inverse problem and an explicit characterization of the mapping properties of the forward operator, we were able to employ Hilbert scale regularization. This allowed us to obtain convergence rates in weaker norms without additional source conditions.

\section*{Acknowledgments}
HE acknowledges support by DFG via Grant IRTG 1529 and GSC 233.
The work of JFP was supported by DFG via Grant 1073/1-1 and from the Daimler and Benz Stiftung via Post-Doc Stipend 32-09/12. 

\bibliographystyle{plain}
\bibliography{bib}

\begin{thebibliography}{10}

\bibitem{Adams75}
R.~A. Adams.
\newblock {\em {S}obolev Spaces}.
\newblock Pure and Applied Mathematics. Academic Press, New York, 1975.

\bibitem{Alifanov}
O.~M. Alifanov.
\newblock {\em Inverse Heat Transfer Problems}.
\newblock Springer-Verlag, New York, 1994.

\bibitem{AliArt75}
O.~M. Alifanov and E.~A. Artyukhin.
\newblock Regularized numerical solution of nonlinear inverse heat-conduction
  problem.
\newblock {\em Inzhenerno-Fizicheskii Zhurnal}, 29:159--164, 1975.

\bibitem{Art80}
E.~A. Artyukhin.
\newblock Reconstruction of the thermal conductivity coefficient from the
  solution of the nonlinear inverse problem.
\newblock {\em Inzhenerno-Fizicheskii Zhurnal}, 41:587--592, 1980.

\bibitem{BakGon89}
A.~B. Bakushinskii and A.~V. Goncharskii.
\newblock {\em Iterative Methods for Solving Ill-posed Problems}.
\newblock Nauka, Moscow, 1989.

\bibitem{BeckBlackwellStClair}
J.~V. Beck, B.~Blackwell, and C.~R. St.~Clair.
\newblock {\em Inverse Heat Conduction: Ill-Posed Problems}.
\newblock Wiley Interscience, New York, 1985.

\bibitem{Cannon67}
J.~R. Cannon.
\newblock Determination of the unknown coefficient $k(u)$ in the equation
  $\nabla \cdot k(u) \nabla u = 0$ from overspecified boundary data.
\newblock {\em J. Math. Anal. Appl.}, 18:112--114, 1967.

\bibitem{Cannon}
J.~R. Cannon.
\newblock {\em The One-Dimensional Heat Equation}.
\newblock Addison-Wesley, Cambridge, 1984.

\bibitem{CannonDuChateau80}
J.~R. Cannon and P.~DuChateau.
\newblock An inverse problem for a nonlinear diffusion equation.
\newblock {\em SIAM J. Appl. Math.}, 39:272--289, 1980.

\bibitem{CannonYin89}
J.~R. Cannon and H.~Yin.
\newblock A class of non-linear non-classical parabolic equations.
\newblock {\em Journal of Differential Equations}, 79:266--288, 1989.

\bibitem{DuChateauRundell85}
P.~DuChateau and W.~Rundell.
\newblock Unicity in an inverse problem for an unknown reaction tem in a
  reaction-diffusion equation.
\newblock {\em Journal of Differential Equations}, 59:155--164, 1985.

\bibitem{DuChateau04}
P.~DuChateau, R.~Thelwell, and G.~Butters.
\newblock Analysis of an adjoint problem approach to the identification of an
  unknown diffusion coefficient.
\newblock {\em Inverse Problems}, 20:601--625, 2004.

\bibitem{Egg06}
H.~Egger.
\newblock Semiiterative regularization in {H}ilbert scales.
\newblock {\em SIAM J. Numer. Anal.}, 44:66--81, 2006.

\bibitem{EggerEnglKlibanov05}
H.~Egger, H.~W. Engl, and M.~V. Klibanov.
\newblock Global uniqueness and {H}\"older stability for recovering a nonlinear
  source term in a parabolic equation.
\newblock {\em Inverse Problems}, 21:271--290, 2005.

\bibitem{EggHenMarMha09}
H.~Egger, Y.~Heng, W.~Marquardt, and A.~Mhamdi.
\newblock Efficient solution of a three-dimensional inverse heat conduction
  problem in pool boiling.
\newblock {\em Inverse Problems}, 25:095006, 2009.

\bibitem{EggPieSch13}
H.~Egger, J.-F. Pietschmann, and M.~Schlottbom.
\newblock Simultaneous identification of diffusion and absorption coefficients
  in a quasilinear elliptic problem.
\newblock submitted.

\bibitem{EHN96}
H.~W. Engl, M.~Hanke, and A.~Neubauer.
\newblock {\em Regularization of Inverse Problems}.
\newblock Kluwer, Dordrecht, 1996.

\bibitem{EnKuNeu89}
H.~W. Engl, K.~Kunisch, and A.~Neubauer.
\newblock Convergence rates for {T}ikhonov regularization of nonlinear
  ill-posed problems.
\newblock {\em Inverse Problems}, 5:523--540, 1989.

\bibitem{GT}
D.~Gilbarg and N.~S. Trudinger.
\newblock {\em Elliptic Partial Differential Equations of Second Order}, volume
  224 of {\em Grundlehren der mathematischen Wissenschaften}.
\newblock Springer, Berlin, 2001.

\bibitem{Gurney1975}
W.S.C. Gurney and R.M. Nisbet.
\newblock The regulation of inhomogeneous populations.
\newblock {\em Journal of Theoretical Biology}, 52(2):441 -- 457, 1975.

\bibitem{InghamYuan93}
D.~B. Ingham and Y.~Yuan.
\newblock The solution of a nonlinear inverse problem in heat transfer.
\newblock {\em IMA J. Appl. Math.}, 50:113--132, 1993.

\bibitem{Isakov93b}
V.~Isakov.
\newblock On uniqueness in inverse problems for semilinear parabolic equations.
\newblock {\em Archive for Rational Mechanics and Analysis}, 124:1--12, 1993.

\bibitem{Isakov93}
V.~Isakov.
\newblock {\em Uniqueness for inverse problems in quasilinear differential
  equations}, volume 422 of {\em Lecture Notes in Physics}, pages 71--86.
\newblock Inverse problems in mathematical physics edition, 1993.

\bibitem{Isakov06}
V.~Isakov.
\newblock {\em Inverse Problems for Partial Differential Equations}.
\newblock Springer, 2006.

\bibitem{IsakovSylvester94}
V.~Isakov and J.~Sylvester.
\newblock Global uniqueness for a semilinear elliptic inverse problem.
\newblock {\em Communications on Pure and Applied Mathematics}, 47:1403--1410,
  1994.

\bibitem{KaltenbacherKlibanov08}
B.~Kaltenbacher and M.~V. Klibanov.
\newblock An inverse problem for a nonlinear parabolic equation with
  applications in population dynamics and magnetism.
\newblock {\em SIAM J. Math. Anal.}, 39:1863--1889, 2008.

\bibitem{KalNeuSch08}
B.~Kaltenbacher, A.~Neubauer, and O.~Scherzer.
\newblock {\em Iterative Regularization Methods for Nonlinear Ill-Posed
  Problems}.
\newblock Radon Series on Computational and Applied Mathematics. de~Gruyter,
  Berlin, 2008.

\bibitem{KinNeu88}
J.~T. King and A.~Neubauer.
\newblock A variant of finite-dimensional {T}ikhonov regularization with
  a-posteriori parameter choice.
\newblock {\em Computing}, 40:91--109, 1988.

\bibitem{Klibanov04}
M.~V. Klibanov.
\newblock Global uniqueness of a multidimensional inverse problem for a
  nonlinear parabolic equation by a carleman estimate.
\newblock {\em Inverse Problems}, 20:1003--1032, 2004.

\bibitem{KlibanovTimonov04}
M.~V. Klibanov and A.~Timonov.
\newblock {\em Carleman estimates for coefficient inverse problems and
  numerical applications}.
\newblock Inverse and Ill-posed Problems Series. VSP, Utrecht, 2004.

\bibitem{KreinPetunin}
S.~G. Krein and J.~I. Petunin.
\newblock Scales of {B}anach spaces.
\newblock {\em Russian Math. Surveys}, 21:85--160, 1966.

\bibitem{Kuegler03}
P.~K\"ugler.
\newblock Identification of a temperature dependent heat conductivity from
  single boundary measurements.
\newblock {\em SIAM J. Numer. Anal.}, 2003:1543--1563, 2003.

\bibitem{LesEllIng96}
D.~Lesnic, L.~Elliot, and D.~B. Ingham.
\newblock Identification of the thermal conductivity and heat capacity in
  unsteady nonlinear heat conduction problems using the boundary element
  method.
\newblock {\em J. Comput. Phys.}, 126:410--420, 1996.

\bibitem{LesIng95}
D.~Lesnic and D.~B. Ingham.
\newblock A note on the determination of the thermal properties of a material
  in a transient nonlinear heat conductino problem.
\newblock {\em Int. Comm. Heat Mass Transfer}, 22:475--482, 1995.

\bibitem{LorenziLunardi90}
A.~Lorenzi and A.~Lunardi.
\newblock An identification problem in the theory of heat conduction.
\newblock {\em Diff. Int. Equat.}, 3:237--252.

\bibitem{Nat84}
F.~Natterer.
\newblock Error bounds for {T}ikhonov regularization in {H}ilbert scales.
\newblock {\em Appl. Anal.}, 18:29--37, 1984.

\bibitem{Neu88}
A.~Neubauer.
\newblock An a-posteriori parameter choice for {T}ikhonov-regularization in
  {H}ilbert scales leading to optimal convergence rates.
\newblock {\em SIAM J. Numer. Anal.}, 25:1313--1326, 1988.

\bibitem{PilantRundell86}
M.~Piland and W.~Rundell.
\newblock An inverse problem for a nonlinear parabolic equation.
\newblock {\em Comm. Part. Diff. Equat.}, 11:445--457, 1986.

\bibitem{PilantRundell88}
M.~Piland and W.~Rundell.
\newblock A uniqueness theorem for determining conductivity from overspecified
  boundary data.
\newblock {\em J. Math. Anal. Appl.}, 136:20--28, 1988.

\bibitem{Rudin}
W.~Rudin.
\newblock {\em Functional Analysis}.
\newblock McGraw-Hill, New York, 1973.

\bibitem{Sun96}
Z.~Sun.
\newblock On a quasilinear inverse boundary value problem.
\newblock {\em Mathematische Zeitschrift}, 221:293--305, 1996.

\bibitem{Tau96}
U.~Tautenhahn.
\newblock Error estimates for regularization methods in {H}ilbert scales.
\newblock {\em SIAM J. Numer.Anal.}, 33:2120--2130, 1996.

\bibitem{Vazquez}
Juan~Luis V{\'a}zquez.
\newblock {\em The porous medium equation}.
\newblock Oxford Mathematical Monographs. The Clarendon Press Oxford University
  Press, Oxford, 2007.
\newblock Mathematical theory.

\bibitem{Yamamoto09}
M.~Yamamoto.
\newblock Carleman estimates for parabolic equations and applications.
\newblock {\em Inverse Problems}, 25:123013, 2009.

\bibitem{Yang03}
C.~Yang.
\newblock Estimation of boundary conditions in nonlinear inverse heat
  conduction problems.
\newblock {\em J. Thermophys. Heat Transfer}, 17:389--395, 2003.

\end{thebibliography}

\end{document}